\numberwithin{equation}{section}
\newcommand{\calK}{\mathcal{K}}
\newtheorem{theorem}{Theorem}[section]
\newtheorem{proposition}[theorem]{Proposition}
\newtheorem{lemma}[theorem]{Lemma}
\theoremstyle{definition}
\newtheorem{definition}[theorem]{Definition}
\newtheorem{remark}[theorem]{Remark}
\newtheorem{example}[theorem]{Example}
\def\now{ \ifnum\hours>11 \ifnum\hours>12 \advance\hours by
-12 \fi \number\hours:\ifnum\mins<10 0\fi \number\mins\ pm,\ \else
\ifnum\hours=0 \hours=12 \fi \number\hours:\ifnum\mins<10 0\fi
\number\mins\ am,\ \fi}
\newcommand{\V}{\mathbb{V}}
\newcommand{\bea}{\begin{equation}}
\newcommand{\eea}{\end{equation}}
\newcommand{\CSK}{Cauchy-Stieltjes Kernel (CSK)\ \renewcommand{\CSK}{CSK\ }}
\newcommand{\FEF}{Free Exponential  (FE)\ \renewcommand{\FEF}{FE\ }}
\title{On Cauchy-Stieltjes Kernel Families}
\author{
W{\l}odek  Bryc}\thanks{\noindent Research partially supported by
NSF grant  DMS-0904720
 and by the Taft Research Center}
\date{ Created   April 29, 2010.\\
{\em File:} {\tt \jobname .tex} of  \today, \now }
\address{
Department of Mathematical Sciences,\\
University of Cincinnati,\\
PO Box 210025,\\
Cincinnati, OH 45221--0025, USA} \email{brycw@math.uc.edu}
\author{Raouf Fakhfakh}
\address{Faculty of Sciences\\
Sfax University \\
PO Box 1171, 3000, Sfax, Tunisia }
\email{abdelhamid.hassairi@fss.rnu.tn}
\author{Abdelhamid Hassairi}
\address{Faculty of Sciences\\
Sfax University \\
PO Box 1171, 3000, Sfax, Tunisia }
\keywords{exponential families, Cauchy kernel, free stable law}
\subjclass[2000]{60E10; 46L54}
\keywords{exponential families, Cauchy kernel, free stable law}
\subjclass[2000]{60E10; 46L54}
\begin{document}


\begin{abstract}
We explore properties of Cauchy-Stieltjes families that have no counterpart in exponential families.
We  relate the variance function of the iterated  Cauchy-Stieltjes  family to the pseudo-variance function of the initial   Cauchy-Stieltjes family. We  also investigate when the domain of means can be extended  beyond the "natural domain".
\end{abstract}

\maketitle

%
%
%

\section{Introduction}
This paper is a continuation of the study of \CSK families. Our goal
here is to advance the understanding of two phenomena that have no
known analogues for the classical exponential families. Firstly, a
typical member of a given \CSK family generates a different \CSK
family, so one can construct new \CSK families by the iteration
process. Secondly, in the natural parametrization of a \CSK family
by the mean, one can sometimes  extend the family beyond the natural
domain of means, preserving the variance function when the variance exists.

The notations used in what follows are the ones used in
Bryc-Hassairi \cite{Bryc-Hassairi-09}.
Throughout the paper $\nu$ is a non-degenerate probability measure
with support bounded from above. Then
\begin{equation}   \label{M(theta)}
M(\theta)=\int \frac{1}{1- \theta x} \nu(dx)
\end{equation}
 is well defined for all $\theta\in [0,\theta_+)$ with $1/\theta_+=\max\{0, \sup {\rm supp} (\nu)\}$ and
 \begin{equation}\label{K_and_F}
\mathcal{K}_+(\nu)=\{P_\theta(dx); \theta\in(0,\theta_+)\}=\{Q_m(dx), m\in(m_0,m_+)\}
 \end{equation}
is the \CSK family generated by $\nu$. That is,
$$P_\theta(dx)=\frac{1}{M(\theta)(1-\theta
x)}\nu(dx)$$ and  $Q_m(dx)$ is the corresponding parametrization by
the mean, which for $m\ne0$ is given by
\begin{equation} \label{F(V)}
 Q_m(dx)=\frac{\V(m)}{\V(m)+m(m-x)}\nu(dx),
\end{equation} 
with
$$ Q_0(dx)=\frac{\V'(0)}{\V'(0)-x}\nu(dx)$$ if $m_0<0<m_+$, and  which involves
the {\em pseudo-variance function} $\V(m)$. The interval $(m_0,m_+)$
is called the (one sided) domain of means, and is determined as the
image of $(0,\theta_+)$ under the strictly increasing function
$k(\theta)=\int x P_\theta(dx)$ which is   given by the formula
\begin{equation}\label{L2m}
k(\theta)=\frac{M(\theta)-1}{\theta M(\theta)}.
\end{equation}

The pseudo-variance function has straightforward probabilistic interpretation if
$m_0=\int x d\nu$ is finite. Then, see \cite[Proposition
3.2]{Bryc-Hassairi-09} we know that
\begin{equation}\label{VV2V}
\frac{\V(m)}{m}=\frac{v(m)}{m-m_0}.\end{equation} where
\begin{equation}
  \label{Def Var}
  v(m)=\int (x-m)^2 Q_m(dx)
\end{equation}
is the so called variance function of the family $\{Q_m\}$. In
particular, $\V=v$ when $m_0=0$.

In general,
\begin{equation}\label{m2v}
\frac{\V(m)}{m}=\frac{1}{\psi(m)}-m,
\end{equation}
where $\psi:(m_0,m_+)\to (0,\theta_+)$ is the inverse of the
function $k(\cdot)$.
 From \eqref{m2v}  it is clear that when
$0\in(m_0,m_+)$, we must have $\V(0)=0$. In this case, we assign
the value $1/\psi(0)$ to the undefined expression $\V(m)/m$ at
$m=0$.

The generating measure $\nu$ is determined uniquely by the
 pseudo-variance function $\V$ through the following identities (for technical details, see \cite{Bryc-Hassairi-09}):
if
\begin{equation}\label{z2m}
 z=z(m)=m+\frac{\V(m)}{ m}
 \end{equation}
 then the Cauchy transform
\begin{equation}\label{G-transform}
G_\nu(z)=\int\frac{1}{z-x}\nu(dx).
\end{equation}
satisfies
 \begin{equation}
  \label{G2V}
  G_\nu(z)=\frac{m}{\V(m)}.
\end{equation}

%
%
%

Let
\begin{equation}\label{B(nu)}
A=A(\nu)=\sup supp(\nu),\ \ \ B=B(\nu)=\max\{0,\ A(\nu)\}.
\end{equation}
We note that  $B(\nu)=1/\theta_+\in[0,\infty)$.

From \cite[Remark 3.3]{Bryc-Hassairi-09}  we read out the following.
\begin{proposition}[\cite{Bryc-Hassairi-09}]\label{P-BH}
For a non-degenerate probability measure $\nu$ with support bounded
from above,  the one-sided domain of means $(m_0,m_+)$ of  is
determined from the following formulas
\begin{equation}\label{m0}
m_0=\lim_{\theta\to 0+} k(\theta)
\end{equation}
and with $B=B(\nu)$,
\begin{equation}\label{m+}
m_+=B-\lim_{b\to B+}\frac{1}{G(b)}.
\end{equation}
\end{proposition}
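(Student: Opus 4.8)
The plan is to deduce both endpoint formulas from the single fact that the parametrizing map $k:(0,\theta_+)\to(m_0,m_+)$ is a strictly increasing continuous bijection, and then to rewrite $k$ in terms of the Cauchy transform $G_\nu$ via the reciprocal substitution $z=1/\theta$. Since a strictly increasing continuous function carries an open interval onto the open interval bounded by its one-sided limits at the endpoints, we obtain directly $m_0=\lim_{\theta\to 0^+}k(\theta)$, which is \eqref{m0}, and $m_+=\lim_{\theta\to\theta_+^-}k(\theta)$. The only real work is to identify this right-endpoint limit with $B-\lim_{b\to B^+}1/G(b)$.

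First I would record the elementary link between $M$ and $G_\nu$. Setting $z=1/\theta$ and using $1-\theta x=(z-x)/z$ in \eqref{M(theta)} gives
\begin{equation}
M(\theta)=\int\frac{z}{z-x}\,\nu(dx)=z\,G_\nu(z).
\end{equation}
Because $\theta M(\theta)=\tfrac1z\,zG_\nu(z)=G_\nu(z)$, substituting into \eqref{L2m} collapses the mean function to
\begin{equation}
k(\theta)=\frac{zG_\nu(z)-1}{G_\nu(z)}=z-\frac{1}{G_\nu(z)},\qquad z=\frac1\theta .
\end{equation}
This matches \eqref{z2m}--\eqref{G2V}, where the same relation reads $m=z-\V(m)/m$ with $\V(m)/m=1/G_\nu(z)$; deriving it afresh from \eqref{M(theta)} and \eqref{L2m} keeps the argument self-contained.

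Finally I would let $\theta\to\theta_+^-$. Since $B=1/\theta_+$, the map $\theta\mapsto z=1/\theta$ is a decreasing homeomorphism carrying the left-hand neighbourhood $\theta\to\theta_+^-$ onto the right-hand neighbourhood $z\to B^+$, so
\begin{equation}
m_+=\lim_{\theta\to\theta_+^-}k(\theta)=\lim_{z\to B^+}\Big(z-\frac{1}{G_\nu(z)}\Big)=B-\lim_{b\to B^+}\frac{1}{G(b)},
\end{equation}
which is \eqref{m+}. The step requiring care, and the one I expect to be the main (if modest) obstacle, is the existence of the one-sided limit $\lim_{b\to B^+}1/G(b)$: for $b>B\ge A$ the integrand $1/(b-x)$ is positive, so $G_\nu(b)$ is finite, strictly decreasing and positive, whence $1/G(b)$ is increasing and bounded below by $0$, and the limit exists in $[0,\infty)$. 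One should also separate the two regimes $B=A>0$ (so $\theta_+=1/B<\infty$) and $B=0$ (so $A\le 0$ and $\theta_+=\infty$, with $z\to 0^+$) to confirm that the substitution $z=1/\theta$ remains valid all the way to the boundary in each case.
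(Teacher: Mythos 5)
Your argument is correct, and it is essentially the same derivation that underlies the cited source: the paper itself states this proposition without proof, importing it from \cite{Bryc-Hassairi-09}, where the key identity $M(\theta)=\tfrac{1}{\theta}G_\nu(1/\theta)$ and the monotonicity of $k$ yield exactly the endpoint limits you compute. Your added care about the existence of $\lim_{b\to B^+}1/G(b)$ and the degenerate case $B=0$, $\theta_+=\infty$ is appropriate and consistent with the conventions used here.
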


\begin{remark}\label{R1.3}We list some additional properties relevant to this paper. \begin{itemize}
\item[(i)]  $m_0<m_+$
\item[(ii)]
Since $1/\psi(m)=m+\V(m)/m$, we know that
$$m+\V(m)/m>m_++\V(m_+)/m_+=1/\theta_+=B(\nu)\geq 0.$$ In particular, $G(m+\V(m)/m)$ is well defined, and non-negative.
\item [(iii)]Since $v(m)\geq 0$, from \eqref{VV2V} we see that $\V(m)/m>0$ for
$m\in(m_0,m_+)$. This holds also when the variance is infinite --
just apply Remark \ref{R1.3} and \eqref{G2V}.
\end{itemize}

\end{remark}

 \section{Iterated \CSK families}
 One difference between the exponential and \CSK families is that one can build nontrivial  iterated  \CSK families.  That is, each member of an exponential family generates the same exponential family so it does not matter which of them we use for the generating measure. But this is not so for \CSK families: each member of a \CSK family generates something different than the original family, so the construction can be iterated.

 Suppose $Q_m$ is in the \CSK family generated by a probability measure $\nu$ with support bounded, say, from above, as given by \eqref{F(V)}. Then necessarily $Q_m$ has the support bounded from above, but it also has one more moment than $\nu$.  Consider now a new \CSK family generated by $Q_m$. Then, as long as $m\ne m_0$,  the variance function of this new family necessarily exists. Our goal is to relate the variance function of this new family to the pseudo-variance function of the initial family.


%

 \subsection{Example: iterated  semicircle \CSK families} Here we use integral identities related to the semicircle law to construct iterated \CSK families by elementary means. The iterations get progressively more cumbersome, and illustrate the need for the general theory.

 For complex $a_1,a_2,a_3,a_4$ let
$$
\widetilde{f}(x;a_1,a_2,a_3,a_4)=\sqrt{4-x^2}\prod_{j=1}^4 (1+a_j^2-a_jx)^{-1} $$

Our starting point is the following formula.
\begin{lemma}\label{L3.1}
If $|a_1|,\dots,|a_4|<1$, then
\begin{equation}\label{AW_int}
\int_{-2}^2\:\widetilde{f}(x;a_1,a_2,a_3,a_4)\:dx=K(a_1,a_2,a_3,a_4)\;,
\end{equation}
where \begin{equation}\label{<1} K(a_1,a_2,a_3,a_4)=2\pi
(1-a_1a_2a_3a_4) \prod_{1\leq i<j\leq 4}(1-a_ia_j)^{-1}\;.
\end{equation}
\end{lemma}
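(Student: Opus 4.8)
The plan is to reduce \eqref{AW_int}--\eqref{<1} to the explicit Cauchy transform of the semicircle law and then to a rational-function identity via partial fractions. Both sides are continuous functions of $(a_1,a_2,a_3,a_4)$ on the polydisk $\{|a_j|<1\}$: the right-hand side $K$ has no poles there because $|a_ia_j|<1$ forces $1-a_ia_j\neq0$, and the integrand on the left has denominators $1+a_j^2-a_jx=(1-a_je^{i\theta})(1-a_je^{-i\theta})$ (with $x=2\cos\theta$) which never vanish for $|a_j|<1$. Hence it suffices to prove the formula on the open dense subset where the $a_j$ are nonzero and pairwise distinct, and then extend by continuity.

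First I would record the one-factor computation. For $a\neq0$ write $1+a^2-ax=a(z-x)$ with $z=a+1/a$; since $|a|<1$ the point $z$ lies in $\CC\setminus[-2,2]$. Evaluating the Cauchy transform of the standard semicircle law $\tfrac1{2\pi}\sqrt{4-x^2}\,dx$ gives
\[
\int_{-2}^2\frac{\sqrt{4-x^2}}{z-x}\,dx=2\pi\,G(z),\qquad G(z)=\tfrac12\bigl(z-\sqrt{z^2-4}\,\bigr).
\]
Because $z^2-4=(a-1/a)^2$, the branch normalized by $G(\infty)=0$ yields the clean identity $G(a+1/a)=a$, so that $\int_{-2}^2\frac{\sqrt{4-x^2}}{z-x}\,dx=2\pi a$.

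Next I would expand the product into partial fractions over the distinct simple poles $z_j=a_j+1/a_j$,
\[
\prod_{j=1}^4\frac{1}{z_j-x}=\sum_{k=1}^4\frac{1}{z_k-x}\prod_{j\neq k}\frac{1}{z_j-z_k},
\]
integrate term by term against $\sqrt{4-x^2}\,dx$, and insert the one-factor value $2\pi a_k$. The elementary relation $z_j-z_k=\dfrac{(a_j-a_k)(a_ja_k-1)}{a_ja_k}$ then collapses all the $a_j^{-1}$ prefactors, leaving
\[
\int_{-2}^2\widetilde f(x;a_1,a_2,a_3,a_4)\,dx
=2\pi\sum_{k=1}^4 a_k^3\prod_{j\neq k}\frac{1}{(a_j-a_k)(a_ja_k-1)}.
\]

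It remains to verify the purely algebraic identity
\[
\sum_{k=1}^4\frac{a_k^3}{\prod_{j\neq k}(a_j-a_k)(a_ja_k-1)}
=(1-a_1a_2a_3a_4)\prod_{1\leq i<j\leq4}\frac{1}{1-a_ia_j},
\]
which is where I expect the real work to lie. I would clear the common denominator $\prod_{i<j}(a_i-a_j)\prod_{i<j}(a_ia_j-1)$ and compare the resulting polynomials; the left-hand numerator is antisymmetric in the $a_j$, and the task is to recognize it as the Vandermonde product $\prod_{i<j}(a_i-a_j)$ times the symmetric factor $1-a_1a_2a_3a_4$. This can be done by a direct symmetric-function computation or, more elegantly, by reading the sum over $k$ as a sum of residues. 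As a consistency check, the specialization $a_3=a_4=0$ reduces both sides to $1/(1-a_1a_2)$, reproducing $K(a_1,a_2,0,0)=2\pi/(1-a_1a_2)$. Thus the analytic input is entirely controlled by the explicit semicircle Cauchy transform $G(a+1/a)=a$, and the only genuine obstacle is the bookkeeping in this final symmetric rational identity.
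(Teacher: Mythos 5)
The paper offers no proof of Lemma \ref{L3.1} at all: it is stated as a known fact (it is the Askey--Wilson integral at $q=0$), so there is nothing in the source to compare you against. Your derivation is a correct and essentially self-contained route to it. The executed steps all check out: for $|a|<1$ the Joukowski point $z=a+1/a$ lies off $[-2,2]$, the semicircle Cauchy transform indeed gives $\int_{-2}^2\frac{\sqrt{4-x^2}}{z-x}\,dx=2\pi a$ with the branch normalized at infinity, the partial-fraction coefficients $\prod_{j\neq k}(z_j-z_k)^{-1}$ are right (and the $z_j$ are automatically distinct for distinct $a_j$ in the disk, since $a_j=1/a_k$ is impossible there), and the factorization $z_j-z_k=(a_j-a_k)(a_ja_k-1)/(a_ja_k)$ does collapse the prefactor $1/(a_1a_2a_3a_4)$ to leave exactly $2\pi\sum_k a_k^3\prod_{j\neq k}\bigl[(a_j-a_k)(a_ja_k-1)\bigr]^{-1}$. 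I also confirmed numerically that this sum equals $(1-a_1a_2a_3a_4)\prod_{i<j}(1-a_ia_j)^{-1}$, so the target identity you isolate is correct.

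The one genuine incompleteness is that this final rational identity --- which is the entire computational content of the lemma --- is asserted with a plan rather than proved. The plan does work, but you should carry it out: over the common denominator $\prod_{i<j}(a_i-a_j)\prod_{i<j}(a_ia_j-1)$ the numerator of the sum is a polynomial of degree at most $12$ that changes sign under transpositions of the $a_j$ (the sum itself is symmetric and the Vandermonde factor is antisymmetric), hence it is divisible by $\prod_{i<j}(a_i-a_j)$, and the symmetric quotient has degree at most $6$; you then still need to argue that this quotient is exactly $\pm(1-a_1a_2a_3a_4)$, e.g.\ by a degree/leading-term count in one variable plus evaluation at a point such as $a_3=a_4=0$ (where, as you note, only the $k=1,2$ terms survive and give $1/(1-a_1a_2)$). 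Until that last verification is written down, the proof is a correct reduction rather than a complete proof; with it, your argument is, if anything, more complete than the paper, which proves nothing here.
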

Applying this formula with $a_1=a_2=a_3=0$ and
$a_4=m\in(0,1)$ (recall that $m_0=0$ and we are in one-sided setting as in \eqref{K_and_F}), we
get
$$\int_{-2}^2 \frac{\sqrt{4-x^2}}{1+m(m-x)}dx=2\pi,$$ so
$$Q_m=\frac{\sqrt{4-x^2}}{2\pi(1+m(m-x))}1_{|x|<2}dx.$$

Now  we fix $Q_{m_1}\in\calK_+(\nu)$ with mean $m_1$. Applying
\eqref{AW_int}  with $a_1=a_2=0$ $a_3=a$ and $a_4=m_1$, we get
$$\int_{-2}^2 \frac{\sqrt{4-x^2}}{(1+a(a-x))((1+m_1(m_1-x)))}dx=\frac{2\pi}{1-am_1},$$
i.e.
$$\int_{-2}^2 \frac{1-am_1}{1+a(a-x)}Q_{m_1}(dx)=1.$$
Rewriting this into the form suggested by \eqref{F(V)}, we see that
\begin{multline}
\frac{1-am_1}{1+a(a-x)}=\frac{1}{\frac{1+a^2}{1-am_1} -
\frac{a}{1-am_1}
x}=\frac{1}{1+\frac{a(a+m_1)}{1-am_1}-\frac{a}{1-am_1}x}
\\=\frac{1}{1+\frac{a}{1-am_1}(a+m_1-x)}=\frac{1}{1+\frac{a(a+m_1)}{(1-am_1)(a+m_1)}(a+m_1-x)}
\end{multline}
Taking $m=a+m_1$, from  \eqref{F(V)} we see that the pseudo-variance
function of the \CSK family $\calK_+(Q_{m_1})$ is
$$\V_1(m)=\frac{(1-am_1)(a+m_1)}{a}=\left(1-(m-m_1)m_1\right)\frac{m}{m-m_1},$$
 i.e.    the corresponding variance function
 $$v_1(m)=\frac{m-m_1}{m}\V_1(m)=1+m_1^2- m_1m$$  is an affine function of  $m$.
 It is clear that the formula works for all
 $m\in(m_1,m_1+1)$ (again, we use the one-sided setup   as in \eqref{K_and_F}).
 This variance function corresponds to an affine transformation of the Marchenko-Pastur law, see \cite[Example 4.1]{Bryc-06-08}.
We will see that the same result will follow from general theory, see
\eqref{v1_semicircle}.

 Now  we iterate this procedure. Fix
 $$Q_{m_2,m_1}=\frac{1-m_1(m_2-m_1)}{1+(m_2-m_1)(m_2-m_1-x)}Q_{m_1}(dx) \in\calK_+(Q_{m_1})$$ with mean $m_2$.
 Applying \eqref{AW_int} again,  with $a_1=0$, $a_2=a$ $a_3=m_2-m_1$ and $a_4=m_1$, we get
\begin{multline}
\int_{-2}^2
\frac{\sqrt{4-x^2}}{(1+a(a-x))((1+(m_2-m_1)(m_2-m_1-x)))((1+m_1(m_1-x)))}dx\\=\frac{2\pi}{(1-(m_2-m_1)m_1)(1-am_1)(1-a(m_2-m_1))},
\end{multline}
i.e. 
 $$\int_{-2}^2
\frac{(1-am_1)(1-a(m_2-m_1))}{1+a(a-x)}Q_{m_2,m_1}(dx)=1.$$ As
previously, after the appropriate choice of $a$ we want to represent
 $$\frac{(1-am_1)(1-a(m_2-m_1))}{1+a(a-x)}$$ as \eqref{F(V)}. From
 \begin{multline}
   \frac{(1-am_1)(1-a(m_2-m_1))}{1+a(a-x)}=\frac{(1-am_1)(1-a(m_2-m_1))}{1+a^2 -ax}
   \\=
   \frac{1}{1+\frac{a}{(1-am_1)(1-a(m_2-m_1))}(a (m_1^2-m_2 m_1+1)+m_2-x)}
 \end{multline}
we read out that with $$a=\frac{m-m_2}{m_1^2-m_2 m_1+1}$$ the
pseudo-variance function of the \CSK family $\calK_+(Q_{m_2,m_1})$
is  
\begin{multline}
  \V_2(m)=\frac{(1-am_1)(1-a(m_2-m_1))(a m_1^2-a m_2 m_1+a+m_2)} { a  }
  \\=\frac{m \left(1-\left(m-m_1\right) m_1\right) \left(\left(m_1-m_2\right)
   \left(m+m_1-m_2\right)+1\right)}{\left(m-m_2\right) \left(m_1^2-m_2
   m_1+1\right)}.\end{multline}
So by \eqref{VV2V} the corresponding variance function 
$$v_2(m)=
\frac{\left(1-\left(m-m_1\right) m_1\right) \left(\left(m_1-m_2\right)
   \left(m+m_1-m_2\right)+1\right)}{\left(m_1^2-m_2
   m_1+1\right)}$$ is a quadratic polynomial in $m$.
   The above argument works when  $|a|<1$, i.e. since we are in one-sided setting as in \eqref{K_and_F}, for all $m_2<m<m_2+m_1^2-m_1m_2+1$.
(This variance function corresponds to an affine transformation of
the free Meixner law.)

The calculations for the next iteration that would start with
$Q_{m_3,m_2,m_1}\in\calK_+(Q_{m_2,m_1})$ with mean $m_3$, seems to be too
cumbersome.

\subsection{General approach}
In this section, we show how to  relate to the domains of means and
the  pseudo-variance functions of the original family $\calK_+(\nu)$
and the new family $\calK_+(Q_{m_1})$.

Fix $m_1\in(m_0,m_+)$, and consider $Q_{m_1}
=P_{\theta_1}\in\mathcal{K}_+(\nu)$, with
$\theta_1\in(0,\theta_+)$.\\
Define
$$M_1(\theta)=\int\frac{1}{1-\theta x}P_{\theta_1}(dx),$$ for 
$\theta\in\Theta=\{\theta \geq 0 ; \ M_1(\theta)<\infty\}$.

The \CSK family generated by $Q_{m_1} =P_{\theta_1}$ is

$${\mathcal{K}}_+(P_{\theta_1})=\{\overline{P}_\theta(dx)\}=\left\{\frac{1}{M_1(\theta)(1-\theta x)}P_{\theta_1}(dx),\;\theta\in\Theta\right\} .$$

\begin{proposition}
\begin{enumerate}
  \item     $\Theta=(0,\theta_+)$\\

\item For $\theta\in\Theta$, we have
\begin{equation}
  \label{(5.1)}  M_1(\theta)=
  \begin{cases}
 \displaystyle\frac{\theta M(\theta)-\theta_1 M(\theta_1)}{M(\theta_1)(\theta-\theta_1)} &\mbox{  if $\theta\neq\theta_1$}; \\
 \\
    \displaystyle\frac{M(\theta_1)+\theta_1 M'(\theta_1)}{M(\theta_1)}&\mbox{ if $\theta=\theta_1$}.
                                                    \end{cases}
\end{equation}

\item  For $\theta\in\Theta$, we set $k(\theta)=\displaystyle\int x
P_\theta(dx) $ the mean of $P_\theta$, and
$k_1(\theta)=\displaystyle\int x \overline{P}_\theta(dx) $, the mean
of $\overline{P}_\theta$. Then

\begin{equation}
  \label{(5.2)}
  k_1(\theta)=\begin{cases}
     \displaystyle\frac{\theta k(\theta)-\theta_1
k(\theta_1)}{(\theta-\theta_1)+\theta\theta_1(k(\theta)-k(\theta_1))} &
\mbox{ if $\theta\neq\theta_1$}; \\
\\
\displaystyle\frac{k(\theta_1)+\theta_1 k'(\theta_1)}
            {1+\theta_1^2 k'(\theta_1)}&\mbox{ if $\theta=\theta_1  $}.
    \end{cases}
\end{equation}

\end{enumerate}

\end{proposition}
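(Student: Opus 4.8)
The plan is to handle the three parts in sequence, reducing everything to the explicit density $P_{\theta_1}(dx)=\frac{1}{M(\theta_1)(1-\theta_1 x)}\nu(dx)$ together with the general mean formula \eqref{L2m}, which applies to any \CSK family and in particular to the iterated family $\calK_+(P_{\theta_1})$.

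For parts (1) and (2) I would substitute the density of $P_{\theta_1}$ into the definition of $M_1$, so that
$$M_1(\theta)=\int\frac{1}{1-\theta x}P_{\theta_1}(dx)=\frac{1}{M(\theta_1)}\int\frac{\nu(dx)}{(1-\theta x)(1-\theta_1 x)}.$$
Part (1) then follows from the observation that the Radon--Nikodym density of $P_{\theta_1}$ with respect to $\nu$ is strictly positive on $\mathrm{supp}(\nu)$, so $\sup\mathrm{supp}(P_{\theta_1})=A(\nu)$ and hence $B(P_{\theta_1})=B(\nu)=1/\theta_+$; the range of finiteness of $M_1$ is therefore the same interval $(0,\theta_+)$ that governs $M$. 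For the explicit value in part (2), I would use, for $\theta\neq\theta_1$, the partial fraction identity
$$\frac{1}{(1-\theta x)(1-\theta_1 x)}=\frac{1}{\theta-\theta_1}\left(\frac{\theta}{1-\theta x}-\frac{\theta_1}{1-\theta_1 x}\right),$$
integrate term by term, and recognize the integrals as $M(\theta)$ and $M(\theta_1)$; this gives the first branch of \eqref{(5.1)} directly. The diagonal case $\theta=\theta_1$ is obtained by letting $\theta\to\theta_1$ (an indeterminate $0/0$ whose value is $\frac{1}{M(\theta_1)}\frac{d}{d\theta}(\theta M(\theta))\big|_{\theta_1}=\frac{M(\theta_1)+\theta_1 M'(\theta_1)}{M(\theta_1)}$), which can also be checked independently from $M_1(\theta_1)=\frac{1}{M(\theta_1)}\int(1-\theta_1 x)^{-2}\nu(dx)$ and the identity $M(\theta_1)+\theta_1 M'(\theta_1)=\int(1-\theta_1 x)^{-2}\nu(dx)$.

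For part (3) the key point is that $\calK_+(P_{\theta_1})$ is a \CSK family generated by $P_{\theta_1}$, so \eqref{L2m} gives $k_1(\theta)=\frac{M_1(\theta)-1}{\theta M_1(\theta)}$. Substituting the $\theta\neq\theta_1$ branch of \eqref{(5.1)}, the common factor $M(\theta_1)(\theta-\theta_1)$ cancels after a short computation of $M_1(\theta)-1$ and $\theta M_1(\theta)$, leaving the intermediate form
$$k_1(\theta)=\frac{M(\theta)-M(\theta_1)}{\theta M(\theta)-\theta_1 M(\theta_1)}.$$
To bring this to the stated form in $k$, I would invert \eqref{L2m} as $M(\theta)=\frac{1}{1-\theta k(\theta)}$, substitute it for both $M(\theta)$ and $M(\theta_1)$, and clear denominators; the factor $\big((1-\theta k(\theta))(1-\theta_1 k(\theta_1))\big)^{-1}$ is common to numerator and denominator and drops out, producing exactly the first branch of \eqref{(5.2)}. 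The diagonal case $\theta=\theta_1$ follows once more by the $\theta\to\theta_1$ limit: numerator and denominator both vanish, and L'Hôpital yields $k(\theta_1)+\theta_1 k'(\theta_1)$ over $1+\theta_1^2 k'(\theta_1)$.

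The analytic input is minimal; the main risk lies in the bookkeeping of part (3), where one must verify that substituting $M=1/(1-\theta k)$ into $\theta M(\theta)-\theta_1 M(\theta_1)$ really reconstitutes the denominator $(\theta-\theta_1)+\theta\theta_1(k(\theta)-k(\theta_1))$ of \eqref{(5.2)}. This is the step I would check most carefully, together with confirming that the two L'Hôpital computations for the diagonal cases match the stated closed forms.
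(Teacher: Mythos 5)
Your proposal is correct and follows essentially the same route as the paper: the same partial-fraction decomposition of $\frac{1}{(1-\theta x)(1-\theta_1 x)}$ for part (2), and the same reduction for part (3) via $k_1(\theta)=\frac{M_1(\theta)-1}{\theta M_1(\theta)}$, the intermediate form $\frac{M(\theta)-M(\theta_1)}{\theta M(\theta)-\theta_1 M(\theta_1)}$, and the substitution $M(\theta)=1/(1-\theta k(\theta))$. The only cosmetic difference is that the paper handles the diagonal case $\theta=\theta_1$ by direct computation of $M_1(\theta_1)$ and $M'(\theta_1)$ rather than by a limit, an alternative you also note.
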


 \begin{proof}

$(i)$ We have that $$M_1(\theta)=\displaystyle\int\frac{1}{1-\theta
x}P_{\theta_1}(dx)=\displaystyle\int \frac{1}{M(\theta_1)(1-\theta_1
x)(1-\theta x)}\nu(dx) .$$ As $\theta_1\in(0,\theta_+)$, the
function $x\mapsto\frac{1}{1-\theta_1 x} $ is bounded on the support
of $\nu$, so that $M_1(\theta)$ exists for $\theta$ such that the
integral $\displaystyle\int \frac{1}{1-\theta x}\nu(dx)$ converges
that is for $\theta$ in $(0,\theta_+).$

$(ii)$
$$M_1(\theta)=\displaystyle\int\frac{1}{1-\theta x}P_{\theta_1}(dx)=\displaystyle\int\frac{1}{M_1(\theta_1)(1-\theta_1 x)(1-\theta x)}\nu(dx),$$

If $\theta\neq\theta_1$, then,
$$\displaystyle\frac{1}{(1-\theta x)(1-\theta_1
x)}=\displaystyle\frac{\theta}{(\theta-\theta_1)(1-\theta
x)}-\displaystyle\frac{\theta_1}{(\theta-\theta_1)(1-\theta_1 x)}
.$$ It follows that
\begin{eqnarray*}
  M_1(\theta) & = & \displaystyle\frac{\theta}{M(\theta_1)(\theta-\theta_1)}\displaystyle\int\frac{1}{1-\theta x}\nu(dx)-\displaystyle\frac{\theta_1}{M(\theta_1)(\theta-\theta_1)}
  \displaystyle\int\frac{1}{1-\theta_1 x}\nu(dx)\\
& = & \displaystyle\frac{\theta M(\theta)-\theta_1
M(\theta_1)}{M(\theta_1)(\theta-\theta_1)} .
\end{eqnarray*}
If $\theta=\theta_1$, then
$$M_1(\theta_1)=\displaystyle\int\frac{1}{M(\theta_1)(1-\theta_1
x)^2}\nu(dx)=\frac{M(\theta_1)+\theta_1 M'(\theta_1)}{M(\theta_1)}
.$$

$(iii)$   We have that
$$k_1(\theta)=\displaystyle\frac{M_1(\theta)-1}{\theta M_1(\theta)}
.$$ If  $\theta\neq\theta_1$, then
\begin{eqnarray*}
k_1(\theta)  = \displaystyle\frac{\displaystyle\frac{\theta
M(\theta)-\theta_1
M(\theta_1)}{M(\theta_1)(\theta-\theta_1)}-1}{\theta\displaystyle\frac{\theta
M(\theta)-\theta_1 M(\theta_1)}{M(\theta_1)(\theta-\theta_1)}}=
\displaystyle\frac{M(\theta)-M(\theta_1)}{\theta M(\theta)-\theta_1
M(\theta_1)} .
\end{eqnarray*}
As $M(\theta)=\displaystyle\frac{1}{1-\theta k(\theta)},$ we obtain
that
$$k_1(\theta)=\displaystyle\frac{\theta k(\theta)-\theta_1
k(\theta_1)}{(\theta-\theta_1)+\theta\theta_1(k(\theta)-k(\theta_1))}.$$
If  $\theta=\theta_1$, then
$$k_1(\theta_1)=\displaystyle\frac{M_1(\theta_1)-1}{\theta_1M_1(\theta_1)}
=\displaystyle\frac{\displaystyle\frac{(M(\theta_1)+\theta_1
M'(\theta_1))}{M(\theta_1)}-1}{\theta_1\displaystyle\frac{(M(\theta_1+\theta_1
M'(\theta_1)))}{M(\theta_1)}}
 =\displaystyle\frac{M'(\theta_1)}{M(\theta_1)+\theta_1
 M'(\theta_1)}\,.$$
Given that $$M'(\theta_1)=\left.\left(\displaystyle\frac{1}{1-\theta
k(\theta)}\right)'\right|_{\theta=\theta_1}=\displaystyle\frac{k(\theta_1)+\theta_1
k'(\theta_1)}{(1-\theta_1 k(\theta_1))^2} \;,$$ we obtain
$$k_1(\theta_1)=\displaystyle\frac{k(\theta_1)+\theta_1
k'(\theta_1)}{1+\theta_1^2 k'(\theta_1)} .$$
 \end{proof}
Next we denote by $D_+(\nu)$ and $\mathbb{V}$ the domain of the
means and the pseudo-variance function of the family
${\mathcal{K}}_+(\nu)$, and by $D_+(Q_{m_1})$ and $\mathbb{V}_{1}$
the domain of the means and the pseudo-variance function of
${\mathcal{K}}_+(Q_{m_1})$. Recall that $D_+(\nu)=k((0,\theta_+))$
and $D_+(Q_{m_1})=k_1((0,\theta_+))$. We set $$m=k(\theta)\ \
\textrm{and}\ \ \overline{m}=k_1(\theta).$$ We will also use the
inverse $\psi$ of the function $\theta\longmapsto k(\theta)$ from
$(0,\theta_+)$ into $(m_0,m_+)$, and the inverse $\psi_1$ of the
function $\theta\longmapsto k_1(\theta)$ from $(0,\theta_+)$ into
its image $(\overline{m}_0,\overline{m}_+)$.

\begin{theorem}\label{T1}
Let $\nu$ be a probability measure with support bounded from above,
and let ${\mathcal{K}}_+(\nu)$ be the CSK family generated by $\nu$.
Fix $m_1\in(m_0,m_+)$ and let $B=B(\nu)$ be given by \eqref{B(nu)}.
With the notations introduced above, we have
\begin{enumerate}
\item
\begin{equation}
  \label{(5.3)}
  \ \   \overline{m}=k_1(\psi(m))=\begin{cases}
        \displaystyle\frac{m^2 \mathbb{V}(m_1)-m_1^2 \mathbb{V}(m)}{m\mathbb{V}(m_1)-m_1\mathbb{V}(m)} & \mbox{ if $ m\neq m_1 $}\\ \\
        \displaystyle\frac{2m_1 \mathbb{V}(m_1)-m_1^2\mathbb{V}'(m_1)}{\mathbb{V}(m_1)-m_1\mathbb{V}'(m_1)} &\mbox{ if $ m=m_1$}.
                                    \end{cases}
\end{equation}
\item the (one sided) domain of means is
$$D_+(Q_{m_1})=(\overline{m}_0,\overline{m}_+)=\left(m_1,\displaystyle\frac{m_+G_\nu(B)-
m_1^2/\mathbb{V}(m_1)}{G_\nu(B)-m_1/\mathbb{V}(m_1)}\right).$$
(interpreted as the limit $b\to B^+$.)
\item
\begin{equation}
  \label{(5.4)} \displaystyle\frac{\mathbb{V}_1(\overline{m})}{\overline{m}}+\overline{m}=\displaystyle\frac{\mathbb{V}(m)}{m}+m .
\end{equation}

\end{enumerate}

\end{theorem}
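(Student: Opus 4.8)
The plan is to exploit that both families are indexed by the \emph{same} parameter set. By part (i) of the preceding Proposition, $\Theta=(0,\theta_+)$, so for each $\theta\in(0,\theta_+)$ I may simultaneously set $m=k(\theta)$ and $\overline m=k_1(\theta)$; thus $m$ and $\overline m$ share the common preimage $\theta=\psi(m)=\psi_1(\overline m)$. This single observation already gives part (3): applying \eqref{m2v} to $\calK_+(\nu)$ yields $\mathbb{V}(m)/m+m=1/\psi(m)=1/\theta$, while applying the same identity to the new family $\calK_+(Q_{m_1})$ yields $\mathbb{V}_1(\overline m)/\overline m+\overline m=1/\psi_1(\overline m)=1/\theta$. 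Equating the two expressions for $1/\theta$ proves \eqref{(5.4)}.

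For part (1), I would substitute the closed forms of $\theta$ and $\theta_1$ into \eqref{(5.2)}. From \eqref{m2v} one has $\theta=\psi(m)=m/(m^2+\mathbb{V}(m))$ and $\theta_1=\psi(m_1)=m_1/(m_1^2+\mathbb{V}(m_1))$, together with $k(\theta)=m$ and $k(\theta_1)=m_1$. Abbreviating $D=m^2+\mathbb{V}(m)$ and $D_1=m_1^2+\mathbb{V}(m_1)$ and inserting these into the $\theta\ne\theta_1$ branch of \eqref{(5.2)}, the common factor $DD_1$ cancels from numerator and denominator; the numerator collapses to $m^2\mathbb{V}(m_1)-m_1^2\mathbb{V}(m)$ and the denominator to $m\mathbb{V}(m_1)-m_1\mathbb{V}(m)$ (the mixed terms $\pm m m_1^2$ and $\pm m^2 m_1$ cancelling pairwise), which is the first line of \eqref{(5.3)}. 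The diagonal case is then recovered by letting $m\to m_1$: the resulting $0/0$ form is resolved by L'H\^opital's rule in $m$, producing exactly $\bigl(2m_1\mathbb{V}(m_1)-m_1^2\mathbb{V}'(m_1)\bigr)/\bigl(\mathbb{V}(m_1)-m_1\mathbb{V}'(m_1)\bigr)$.

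For part (2), I would identify $D_+(Q_{m_1})=k_1((0,\theta_+))$ as the open interval between two endpoint limits, using that $k_1$ is strictly increasing as the mean map of a \CSK family. The lower endpoint follows from \eqref{(5.2)}: since $\theta k(\theta)=(M(\theta)-1)/M(\theta)\to0$ by \eqref{L2m} and $M(0)=1$, as $\theta\to0^+$ its numerator tends to $-\theta_1 m_1$ and its denominator to $-\theta_1$, so $\overline m_0=m_1$ (consistent with $m_1$ being the mean of the generator $Q_{m_1}$). For the upper endpoint I would divide numerator and denominator of the part (1) formula by $\mathbb{V}(m)$ and invoke \eqref{G2V} in the form $m/\mathbb{V}(m)=G_\nu(z(m))$ with $z(m)=m+\mathbb{V}(m)/m$, rewriting $\overline m=\bigl(m\,\mathbb{V}(m_1)G_\nu(z(m))-m_1^2\bigr)/\bigl(\mathbb{V}(m_1)G_\nu(z(m))-m_1\bigr)$. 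As $m\to m_+^-$, Remark \ref{R1.3}(ii) gives $z(m)\to m_++\mathbb{V}(m_+)/m_+=B$, hence $G_\nu(z(m))\to G_\nu(B)$ read as $b\to B^+$, and dividing through by $\mathbb{V}(m_1)$ yields the stated $\overline m_+$.

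The main obstacle is this upper-endpoint passage to the limit: one must justify that $z(m)\to B$ and that the boundary value $G_\nu(B)=\lim_{b\to B^+}G(b)$ exists (possibly $+\infty$), so that the limit may be taken inside the rational expression for $\overline m$. By contrast the algebra in part (1) is routine once the substitutions are made, and part (3) is essentially immediate from the shared parametrization.
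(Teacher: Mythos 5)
Your proposal is correct and follows essentially the same route as the paper: part (3) via the shared parameter $\theta=\psi(m)=\psi_1(\overline m)$ together with \eqref{m2v}, part (1) by substituting $\psi(m)=m/(m^2+\mathbb{V}(m))$ into \eqref{(5.2)} and resolving the diagonal case as a limit, and part (2) by taking the endpoint limits of the part (1) formula, with \eqref{G2V} and Remark \ref{R1.3}(ii) handling the upper endpoint exactly as in the paper. The only cosmetic difference is that you compute $\overline m_0$ directly as $\theta\to 0^+$ in \eqref{(5.2)} rather than as $m\to m_0$ in \eqref{(5.3)}, which is equivalent.
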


Note that the function $m\longmapsto\overline{m}$ is a bijection
from $D_+(\nu)$ into $D_+(Q_{m_1})$, so that to get explicitly the
pseudo-variance function of the CSK family
${\mathcal{K}}_+(Q_{m_1})$, we need to express $m$ in terms of
$\overline{m}$ from \eqref{(5.3)} and insert it in \eqref{(5.4)}.

 \begin{proof}
 $(i)$ Suppose that $m\neq m_1$.
\begin{eqnarray*}
\overline{m} & = & k_1(\psi(m)) \\
& = & \displaystyle\frac{m\psi(m)-m_1\psi(m_1)}{(\psi(m)-\psi(m_1))+\psi(m)\psi(m_1)(m-m_1)} \\
& = & \displaystyle\frac{m^2(\mathbb{V}(m_1)+m_1^2)-m_1^2(\mathbb{V}(m)+m^2)}{m(\mathbb{V}(m_1)+m_1^2)-m_1(\mathbb{V}(m)+m^2)+m m_1(m-m_1)}\\
& = & \displaystyle\frac{m^2 \mathbb{V}(m_1)-m_1^2 \mathbb{V}(m)}{m
\mathbb{V}(m_1)-m_1 \mathbb{V}(m)} .
\end{eqnarray*}
For $m=m_1$, we have
\begin{eqnarray*}
\overline{m}_1=k_1(\psi(m_1))& = &k_1(\theta_1)  =
\displaystyle\frac{k(\theta_1)+\theta_1 k'(\theta_1)}{1+\theta_1^2
k'(\theta_1)}
 =  \displaystyle\lim_{\theta\longrightarrow\theta_1}\displaystyle\frac{\theta k(\theta)-\theta_1 k(\theta_1)}{(\theta-\theta_1)+\theta\theta_1 (k(\theta)-k(\theta_1))}\\
& =& \displaystyle\lim_{m\longrightarrow m_1}\displaystyle\frac{m^2 \mathbb{V}(m_1)-m_1^2 \mathbb{V}(m)}{m \mathbb{V}(m_1)-m_1 \mathbb{V}(m)}\\
& = &  \displaystyle\lim_{m\longrightarrow
m_1}\displaystyle\frac{\mathbb{V}(m_1)\mathbb{V}(m)(\displaystyle\frac{m^2}{\mathbb{V}(m)}-
\displaystyle\frac{m_1^2}{\mathbb{V}(m_1)})}{\mathbb{V}(m_1)\mathbb{V}(m)(\displaystyle\frac{m}{\mathbb{V}(m)}-
\displaystyle\frac{m_1}{\mathbb{V}(m_1)})}\\
& = & \displaystyle\frac{(m^2/\mathbb{V}(m))'}{(m/\mathbb{V}(m))'}\mid_{m=m_1}\\
& = & \displaystyle\frac{2m_1
\mathbb{V}(m_1)-m_1^2\mathbb{V}'(m_1)}{\mathbb{V}(m_1)-m_1\mathbb{V}'(m_1)}
.
\end{eqnarray*}
$(ii)$ Using the definition of the domain of means,
\begin{eqnarray*}
 \overline{m}_0=\displaystyle\lim_{\theta\longrightarrow
0}k_1(\theta)  =  \displaystyle\lim_{m\longrightarrow
m_0}\displaystyle\frac{m^2 \mathbb{V}(m_1)-m_1^2
\mathbb{V}(m)}{m\mathbb{V}(m_1)-m_1\mathbb{V}(m)} =
\displaystyle\lim_{m\longrightarrow
m_0}\displaystyle\frac{\displaystyle\frac{m^2}{\mathbb{V}(m)}-\displaystyle\frac{m_1^2}
{\mathbb{V}(m_1)}}{\displaystyle\frac{m}{\mathbb{V}(m)}-\displaystyle\frac{m_1}{\mathbb{V}(m_1)}}=m_1
.
\end{eqnarray*}
\begin{eqnarray*}
\overline{m}_+=\displaystyle\lim_{\theta\longrightarrow\theta_+}k_1(\theta)
& = & \displaystyle\lim_{m\longrightarrow m_+}\displaystyle\frac{m^2
\mathbb{V}(m_1)-m_1^2
\mathbb{V}(m)}{m\mathbb{V}(m_1)-m_1\mathbb{V}(m)}
 =  \displaystyle\lim_{m\longrightarrow m_+}\displaystyle\frac{\displaystyle\frac{m^2}{\mathbb{V}(m)}-\displaystyle\frac{m_1^2}{\mathbb{V}(m_1)}}
{\displaystyle\frac{m}{\mathbb{V}(m)}-\displaystyle\frac{m_1}{\mathbb{V}(m_1)}}\\
& = &
\lim_{b\to B^+}\displaystyle\frac{m_+G_\nu(b)-\displaystyle\frac{m_1^2}{\mathbb{V}(m_1)}}{G_\nu(b)-\displaystyle\frac{m_1}{\mathbb{V}(m_1)}}
=
\displaystyle\frac{m_+G_\nu(B)-\displaystyle\frac{m_1^2}{\mathbb{V}(m_1)}}{G_\nu(B)-\displaystyle\frac{m_1}{\mathbb{V}(m_1)}}
.
\end{eqnarray*}
(This is  $m_+$ when $\lim_{b\to B^+}G(b)=\infty$.)

$(iii) $   For $\theta\in(0,\theta_+)$ we have $
\theta=\psi(k(\theta))=\psi_1(k_1(\theta))$ so that $
\psi(m)=\psi_1(\overline{m})\ .$ By  \eqref{m2v}, this implies
\eqref{(5.4)}.
\end{proof}
Note that as the probability measure $Q_{m_1}$ has a finite first
moment $\overline{m}_0=m_1$, the variance function $v_1(.)$ of the
CSK family ${\mathcal{K}}_+(Q_{m_1})$ exists and from \eqref{VV2V}
we have
 $$\V_1(\overline{m})=\displaystyle\frac{\overline{m}}{\overline{m}-m_1}v_1(\overline{m}) .$$

\subsection{Applications}
The following examples illustrate the usefulness of Theorem
\ref{T1}, and  provide
 examples of \CSK families with rational variance functions.
\subsubsection{\CSK families with quadratic variance function}
The \CSK families with quadratic variance function have
\begin{equation}\label{QV}
v(m)= 1+am+bm^2 =\mathbb{V}(m),
\end{equation}
(we consider centered case here, with  $m_0=0$).

Formula \eqref{(5.3)} gives that
$m=\displaystyle\frac{\overline{m}-m_1}{1+am_1+bm_1\overline{m}}.$

Formula \eqref{(5.4)} gives that
$$\mathbb{V}_1(\overline{m})=\displaystyle\frac{\overline{m}}{(\overline{m}-m_1)
(1+am_1+bm_1\overline{m})}P(\overline{m}),$$ {where}
$P(\overline{m})=( 1+ \overline{m} (a+b \overline{m})) \left(1+m_1
   (a-\overline{m}+(b+1)
   m_1)\right)$. \\The corresponding
variance function is
 $$v_1(\overline{m})=\displaystyle\frac{1}{1+am_1+bm_1\overline{m}}P(\overline{m}),$$

The following two special cases are of interest.
\begin{example}
The Wigner's semicircle (free Gaussian) law
$$\nu(dx)=\displaystyle\frac{\sqrt{4-x^2}}{2\pi}1_{(-2,2)}(x)dx ,$$
has a constant variance function i.e. \eqref{QV} holds with $a=b=0$:
 $v(m)=1={\V}(m)$ and the (one-sided) domain of means is $D_+(\nu)=(0,1).$ (The full two-sided domain of means is of course $(-1,1)$.) For $m_1\in D_+(\nu)$, the probability measure

 $$Q_{m_1}(dx)=\displaystyle\frac{\sqrt{4-x^2}}{2\pi(1+m_1(m_1-x))}1_{(-2,2)}(x)dx ,$$
generates \CSK family with pseudo-variance function
 $\mathbb{V}_1(\overline{m})=\displaystyle\frac{\overline{m}}{\overline{m}-m_1}(-m_1\overline{m}+m_1^2+1)$, and domain of means $D_+(Q_{m_1})=(m_1,1+m_1)$. The corresponding variance function is
\begin{equation}\label{v1_semicircle}
v_1(\overline{m})=-m_1\overline{m}+m_1^2+1.
\end{equation}

Up to affine transformation, this is in fact the Marchenko-Pastur
law, see next example.
\end{example}
\begin{example}
 The (absolutely continuous) Marchenko-Pastur (free Poisson) law
 $$\nu(dx)=\displaystyle\frac{\sqrt{4-(x-a)^2}}{2\pi(1+ax)}1_{(a-2,a+2)}(x)dx$$
 corresponds to \eqref{QV} with  $b=0$ and
$0<a^2<1$. The variance function is  $v(m)=1+am={\V}(m)$, and the
domain of means is $D_+(\nu)=(0,1).$\\ For $m_1\in D_+(\nu)$, the
probability measure
$$Q_{m_1}(dx)=\displaystyle\frac{(1+am_1)\sqrt{4-(x-a)^2}}{2\pi(1+m_1(a+m_1-x))(1+ax)}1_{(a-2,a+2)}(x)dx$$
generates \CSK family with pseudo-variance function
$${\V}_1(\overline{m})=\displaystyle\frac{\overline{m}}{(1+am_1)(\overline{m}-m_1)}(1+a \overline{m}) \left(1+m_1 \left(a+m_1-\overline{m}\right)\right).$$
The domain of means is
$$D_+(Q_{m_1})=(m_1,1+(a+1) m_1).$$
The variance function is
$$
v_1(\overline{m})=\frac{(1+a \overline{m}) \left(1+m_1
\left(a+m_1-\overline{m}\right)\right)}{1+a
   m_1}.
$$

%
\end{example}
\begin{example} For $a^2>1$, the Marchenko Pastur law is
$$\nu(dx)=\displaystyle\frac{\sqrt{4-(x-a)^2}}{2\pi(1+ax)}1_{(a-2,a+2)}(x)dx+ (1-1/a^2) \delta_{-1/a}(dx)$$
If $a>1$, $B(\nu)=a+2$ and the upper endpoint of the domain of means
is $m_+=1$. In this case, $D_+(\nu)=(0,1),$ and for $m_1\in
D_+(\nu)$, we have
\begin{multline}
Q_{m_1}(dx)=\frac{(1+am_1)\sqrt{4-(x-a)^2}}{2\pi(1+m_1(a+m_1-x))(1+ax)}1_{(a-2,a+2)}(x)dx\\
+\frac{1+am_1}{1+m_1(a+m_1+1/a)}(1-1/a^2) \delta_{-1/a}(dx).
\end{multline}
 This distribution generates the \CSK family with pseudo-variance function
$${\V}_1(\overline{m})=\displaystyle\frac{\overline{m}}{(1+am_1)(\overline{m}-m_1)}(1+a \overline{m}) \left(1+m_1 \left(a+m_1-\overline{m}\right)\right),$$
and domain of means
$$D_+(Q_{m_1})=(m_1,1+(a+1) m_1).$$
The variance function is
$$
v_1(\overline{m})=\frac{(1+a \overline{m}) \left(1+m_1
\left(a+m_1-\overline{m}\right)\right)}{1+a
   m_1}.
$$
{If $a<-1$, then $B(\nu)=-1/a$}, and the domain of means is
$D_+(\nu)=(0,-1/a)$.
 For
$m_1\in D_+(\nu)$, we have that
\begin{multline}
Q_{m_1}(dx)=\frac{(1+am_1)\sqrt{4-(x-a)^2}}{2\pi(1+m_1(a+m_1-x))(1+ax)}1_{(a-2,a+2)}(x)dx\\+
\frac{1+am_1}{1+m_1(a+m_1-x)}(1-1/a^2) \delta_{-1/a}(dx).
\end{multline}
It generates the \CSK family with pseudo-variance function
$${\V}_1(\overline{m})=\displaystyle\frac{\overline{m}}{(1+am_1)(\overline{m}-m_1)}(1+a \overline{m}) \left(1+m_1 \left(a+m_1-\overline{m}\right)\right),$$
with domain of means
$$D_+(Q_{m_1})=(m_1,-1/a).$$
The variance function is, in this case,
$$
v_1(\overline{m})=\frac{(1+a \overline{m}) \left(1+m_1
\left(a+m_1-\overline{m}\right)\right)}{1+a
   m_1}.
$$
\end{example}

\subsubsection{\CSK families with cubic pseudo-variance function :}

For $a>0$, the cubic pseudo-variance function
\begin{equation}\label{CV}
{\V}(m)=m(am^2+bm+c)
\end{equation}   corresponds to \CSK families without variance.
Formula \eqref{(5.3)} gives that
$m=-\displaystyle\frac{\overline{m}(b+am_1)+c}{a(\overline{m}-m_1)}.$
Formula \eqref{(5.4)} gives
${\V}_1(\overline{m})=\displaystyle\frac{\overline{m}}{a(\overline{m}-m_1)^2}Q(\overline{m})
$, so the corresponding variance function is
$$v_1(\overline{m})=\displaystyle\frac{1}{a(\overline{m}-m_1)}Q(\overline{m}),$$
with
$Q(\overline{m})=(c+\overline{m} (b+a \overline{m}))
\left(c-\overline{m}+m_1 \left(b+a m_1+1\right)\right)$.

%

The following special cases are of interest
\begin{example}\label{Ex:Abel}
The Free Abel (or Free Borel-Tanner) law
$$\nu(dx)=\displaystyle\frac{1}{\pi(1-x)\sqrt{-x}}1_{(-\infty,0)}(x)dx $$
has domain of means $D_+(\nu)=(-\infty,0)$ and pseudo-variance
function $\mathbb{V}(m)=m^2(m-1)$. For $m_1\in D_+(\nu)$,
probability measure
$$Q_{m_1}(dx)=\displaystyle\frac{m_1(m_1-1)}{\pi(m_1^2-x)(1-x)\sqrt{-x}}1_{(-\infty,0)}(x)dx,$$
generates \CSK family with pseudo-variance function
$$ \mathbb{V}_1(\overline{m})=\displaystyle\frac{\overline{m}^2}
{(\overline{m}-m_1)^2}(1-\overline{m})(\overline{m}-m_1^2)\,,
$$
and the domain of means $D_+(Q_{m_1})=(m_1,0)$. The corresponding
variance function is
$$ v_1(\overline{m})=\displaystyle\frac{\overline{m}}
{\overline{m}-m_1}(1-\overline{m})(\overline{m}-m_1^2).$$
%
\end{example}
\begin{example}
The free Ressel (or free Kendall) law
$$\nu(dx)=\displaystyle\frac{-1}{\pi x\sqrt{-1-x}}1_{(-\infty,-1)}(x)dx $$
has domain of means  $D_+(\nu)=(-\infty,-2)$ and the pseudo-variance
function $\mathbb{V}(m)=m^2(m+1)$. For $m_1\in D_+(\nu)$, the
probability measure
$$Q_{m_1}(dx)=\displaystyle\frac{-m_1(1+m_1)}{\pi x(m_1^2+2m_1-x)\sqrt{-1-x}}1_{(-\infty,-1)}(x)dx, $$
generates \CSK family with pseudo-variance function
$$
  \mathbb{V}_1(\overline{m})=\displaystyle\frac{\overline{m}^2}{(\overline{m}-m_1)^2}
\left(-\overline{m}^2+(m_1^2+2m_1-1)\overline{m}+m_1^2+2m_1\right).
$$
and the domain of means
$$ D(Q_{m_1})=\left(m_1,\displaystyle\frac{2m_1}{1-m_1}\right).$$
The corresponding variance function is
$$v_1(\overline{m})=\displaystyle\frac{\overline{m}}{\overline{m}-m_1}
(\overline{m}+1)(m_1^2+2m_1-\overline{m}).$$

\end{example}
\begin{example} The free strict arcsine law
$$\nu(dx)=\displaystyle\frac{\sqrt{3-4x}}{2\pi(1+x^2)}1_{(-\infty,3/4)}(x)dx $$
has pseudo-variance function $\mathbb{V}(m)=m(1+m^2)$, and the
domain of means $ D_+(\nu)=(-\infty,-1/2)$. For $m_1\in D_+(\nu)$,
probability measure
$$Q_{m_1}(dx)=\displaystyle\frac{(m_1^2+1)\sqrt{3-4x}}{2\pi(m_1^2+m_1+1-x)(1+x^2)}1_{(-\infty,3/4)}(x)dx$$
generates \CSK family with pseudo-variance function
$$\mathbb{V}_1(\overline{m})=\displaystyle\frac{\overline{m}}
{(\overline{m}-m_1)^2}(-\overline{m}^3+(m_1^2+m_1+1)\overline{m}^2-\overline{m}+(m_1^2+m_1+1))$$
and with domain of means
$$D_+(Q_{m_1})=\left(m_1,\displaystyle\frac{2+m_1}{1-2m_1}\right).$$
The corresponding variance function is
$$v_1(\overline{m})=\displaystyle\frac{1+\overline{m}^2}{\overline{m}-m_1}( m_1^2+m_1+1-\overline{m}).$$
\end{example}
\begin{example} \label{Ex:ISC} The inverse semicircle law
$$\nu(dx)=\displaystyle\frac{p\sqrt{-p^2-4x}}{2\pi x^2}1_{(-\infty,-p^2/4)}(x)dx ,$$
corresponds to \eqref{CV} with $a=1/p^2\ , \ b=c=0$. The
pseudo-variance function is $\mathbb{V}(m)=m^3/p^2$, and the domain
of means is $D_+(\nu)=(-\infty,-p^2)$.

For $m_1\in D_+(\nu)$, probability measure
$$Q_{m_1}(dx)=\displaystyle\frac{pm_1^2\sqrt{-p^2-4x}}{2\pi x^2(m_1^2+p^2(m_1-x)) }1_{(-\infty,-p^2/4)}(x)dx $$
generates \CSK family with pseudo-variance function 
$$\mathbb{V}_1(\overline{m})=\displaystyle\frac{\overline{m}^3}
{(\overline{m}-m_1)^2}(m_1^2/p^2+m_1-\overline{m}),$$
and with domain of means
$$D_+(Q_{m_1})=\left(m_1,\frac{p^2
m_1}{p^2-m_1}\right).$$ The corresponding variance function is
$$v_1(\overline{m})=\displaystyle\frac{\overline{m}^2}{(\overline{m}-m_1)}(m_1^2/p^2+m_1- \overline{m}).$$

%
\end{example}

\section{Extending the domain for parametrization by the mean} \label{Q5.}
 Given a compactly supported measure $\nu$,  Proposition \ref{P-BH} tells us  how to determine  the  one-sided domain of means $(m_0,m_+)$ and how to compute the pseudo-variance function $\V(m)$ for $m\in(m_0,m_+)$.  (There is a similar result for the two-sided domain of means, see \cite[Remark 3.3]{Bryc-Hassairi-09}.)
 But the pseudo-variance function is often well defined for other values of $m$, too. So it is natural to ask whether the corresponding "family of measures" can also be enlarged.  The following example illustrates the idea, drawing on well known properties of the Marchenko-Pastur law.
 \begin{example}\label{Ex:semicircle}
   Consider the (two-sided) \CSK family generated by the semicircle law $\nu=\frac{1}{2\pi}\sqrt{4-x^2}1_{|x|<2}dx$ with the variance function $v(m)=\V(m)=1$, the domain of means $(-1,1)$ and
   $$\mathcal{K}(\nu)=\left\{ \frac{\sqrt{4-x^2}}{2\pi(1+m(m-x))}1_{|x|<2}dx: m\in(-1,1)\right\}.$$
 This is a family of atomless Marchenko-Pastur laws, which can be naturally enlarged to include all Marchenko-Pastur laws:
  $$\overline{\mathcal{K}}(\nu)=\left\{\pi_m(dx)= \frac{\sqrt{4-x^2}}{2\pi(1+m(m-x))}1_{|x|<2}dx+ (1-1/m^2)^+\delta_{m+1/m}:\; m\in(-\infty,\infty)\right\}$$
Noting that
$$\int \pi_m(dx)=1, \; \int x \pi_m(dx)=m,\; \int (x-m)^2 \pi_m(dx)=1,$$
we see that $v(m)=1$ is the variance function of this enlarged
family.
 \end{example}
 Of course, it may also happen that the extension beyond the natural
domain of means is not possible. 
family is full.
\begin{example}\label{Ex:3.2}
Let $\nu=\frac{1}{2}\delta_{-1}+\frac{1}{2}\delta_1$ be the
symmetric Bernoulli distribution. Then
$M(\theta)=\frac{1}{1-\theta^2}$ and $m(\theta)=\theta$. The
(two-sided) range of parameter is $\Theta=(-1,1)$. So the   domain
of means here is $(-1,1)$, and with  $m_0=0$ the pseudo-variance
function is equal to the variance function,
$$
v(m)=\V(m)=1-m^2.
$$
In this case, the variance function is negative outside the domain
of means, so we cannot extend the family $\{Q_m: m\in(-1,1)\}$
beyond the original domain of means while preserving the variance
function $v(m)$, and the relation between $v(m)$ and the
Cauchy-Stieltjes transform.
\end{example}
Our next example shows that the extension sometimes  may proceed in two separate steps.

\begin{example}\label{Ex:3.3}
Consider the inverse semicircle law from Example \ref{Ex:ISC} with
$p=1$. Since $m^2+m\geq -1/4$, it  is clear that  measure $Q_m$ is
non-negative and well defined for all  $m$. Since the integral $\int
Q_m(dx)$ is an analytic function of $m<-1/2$, it must be $1$, so
$Q_m$ is a probability measure for all $m<-1/2$. This is the "first
part" of the extension, from $(-\infty,-1)$  to a larger interval
$(-\infty,-1/2)$.

At $m=-1/2$ the integrand has singularity at
$x=-1/4$ but the integral is still $1$, see the calculation below.
For $m>-1/2$,  the mass becomes less then one,  as $\int
Q_m(dx)=m^2/(1+m)^2$.
So for $m>-1/2$ we can define a new probability measure
 \begin{multline}\label{ISQ}
 \overline Q_m(dx)=Q_m(dx)+\left(1-\frac{m^2}{(1+m)^2}\right)\delta_{m+m^2}(dx)
 = Q_m(dx)+\frac{(1+2m)_+}{(1+m)^2}\delta_{m+m^2}(dx)
 \end{multline}
 with extra mass in the atomic part.

 The definition \eqref{m2v} of pseudo-variance is not directly  applicable beyond $m>-1$. However, if we use relation \eqref{G2V}, then $\V(m)=m^3$ also for $m>-1$. Thus we may claim that the family
$\{\overline Q_m(dx)\}$ extends the domain of means for $\V(m)=m^3$ to $(-\infty,\infty)$.

\end{example}

We now prove the above two claims.
\begin{proof}[Proof of the claims in Example \ref{Ex:3.3}]
 By the change of variable $t=\sqrt{-1-4x}$ in
 $$\displaystyle\int
Q_m(dx)=\displaystyle\int_{-\infty}^{-1/4}\displaystyle\frac{m^2\sqrt{-1-4x}}{2\pi
x^2(m^2+m-x) }dx,$$
 we obtain
 $$\displaystyle\int
Q_m(dx)=\frac{16m^2}{\pi}\displaystyle\int_0^{+\infty}\frac{t^2}{(t^2+1)^2((2m+1)^2+t^2)}dt.$$
The integrand can be decomposed as follows
$$\frac{t^2}{(t^2+1)^2((2m+1)^2+t^2)}=\frac{(2m+1)^2}{((2m+1)^2-1)^2(t^2+1)}-\frac{1}{((2m+1)^2-1)(t^2+1)^2}$$
$$-\frac{(2m+1)^2}{((2m+1)^2-1)^2(t^2+(2m+1)^2)}.$$
For real numbers $a, \ b$, $r\neq0$, we denote
$J_n=\int_a^b\frac{dx}{(x^2+r^2)^n}$. Then we have
$$J_{n+1}=\frac{1}{2nr^2}\left((2n-1)J_n+\left[\frac{x}{(x^2+r^2)^n}\right]_a^b\right).$$
Using this, we get:\\
 For $m=-1/2$,
\begin{eqnarray*}
\displaystyle\int Q_{-1/2}(dx)
 & =
 &\frac{4}{\pi}\displaystyle\int_0^{+\infty}\frac{1}{(t^2+1)^2}dt 
 \\
& =
&\frac{4}{\pi}\left(\frac{1}{2}\left(\frac{\pi}{2}+\left[\tfrac{x}{1+x^2}\right]_0^{+\infty}\right)\right)=1.
\end{eqnarray*}
For $m\neq-1/2$
\begin{eqnarray*}
\displaystyle\int Q_m(dx) & = &
\frac{16m^2}{\pi}\displaystyle\int_0^{+\infty}\frac{t^2}{(t^2+1)^2((2m+1)^2+t^2)}dt\\
& = &
\frac{16m^2}{\pi}\Big(\int\frac{(2m+1)^2}{((2m+1)^2-1)^2(t^2+1)}dt-\int\frac{1}{((2m+1)^2-1)(t^2+1)^2}dt\\
& - & \int\frac{(2m+1)^2}{((2m+1)^2-1)^2(t^2+(2m+1)^2)}dt\Big)\\
& = &
\frac{16m^2}{\pi}\Big(\frac{(2m+1)^2}{((2m+1)^2-1)^2}[\arctan(t)]_0^{+\infty}-\frac{1/2}{(2m+1)^2-1}\left(\frac{\pi}{2}+\left[\tfrac{t}{(t^2+1)^2}\right]_0^{+\infty}\right)\\
& - &
\frac{(2m+1)}{((2m+1)^2-1)^2}\left[\arctan(\tfrac{t}{2m+1})\right]_0^{+\infty}\Big).
\end{eqnarray*}
If $m<-1/2$
\begin{eqnarray*}
\displaystyle\int Q_m(dx) & =
&\frac{16m^2}{\pi}(\frac{(2m+1)^2}{((2m+1)^2-1)^2}\frac{\pi}{2}-\frac{1}{((2m+1)^2-1)}\frac{\pi}{4}\\
& - & \frac{(2m+1)}{((2m+1)-1)^2}(-\frac{\pi}{2}))=1.
\end{eqnarray*}
If $m>-1/2$
\begin{eqnarray*}
\displaystyle\int Q_m(dx) & = &
\frac{16m^2}{\pi}\Big(\frac{(2m+1)^2}{((2m+1)^2-1)^2}\frac{\pi}{2}-\frac{1}{((2m+1)^2-1)}\frac{\pi}{4}\\
& - & \frac{(2m+1)}{((2m+1)-1)^2}\frac{\pi}{2}\Big)=\frac{m^2}{(1+m)^2}.
\end{eqnarray*}

We now verify that the atomic part works as needed.

 By the change of variable $t=\sqrt{-1-4x}$ from \eqref{ISQ} we get
\begin{eqnarray*}
\displaystyle\int x Q_m(dx) & = &
\displaystyle\int_{-\infty}^{-1/4}\displaystyle\frac{m^2\sqrt{-1-4x}}{2\pi
x(m^2+m-x) }dx\\
& = &
-\displaystyle\frac{4m^2}{\pi}\displaystyle\int_0^{+\infty}\frac{t^2}{(t^2+1)((2m+1)^2+t^2)}dt\\
& = &
-\displaystyle\frac{4m^2}{\pi}(-\displaystyle\int_0^{+\infty}\frac{1}{4m(1+m)(t^2+1)}dt+\displaystyle\int_0^{+\infty}\frac{(2m+1)^2}{4(m^2+m)((2m+1)^2+t^2)}dt)\\
 & = &-\displaystyle\frac{4m^2}{\pi}(\frac{-1}{4m(1+m)}[\arctan(t)]_0^{+\infty}+\displaystyle\frac{(2m+1)}{4m(1+m)}[\arctan(\frac{t}{2m+1})]_0^{+\infty})\\
 & = &-\displaystyle\frac{4m^2}{\pi}(\frac{-1}{4m(1+m)}\frac{\pi}{2}+\displaystyle\frac{(2m+1)}{4m(1+m)}\frac{\pi}{2})=-\frac{m^2}{1+m}.
\end{eqnarray*}
So
$$ \int x\overline Q_m(dx)=
-\frac{m^2}{1+m}+\frac{1-2m}{(1+m)^2}m(1+m)=m $$
  as expected.

\end{proof}



We now give a general theory that shows how the two-step extension works.
\subsection{The first extension}

 Suppose that the pseudo-variance
function $\V$ extends as a
real analytic function to $(m_0,+\infty)$. Recall notation \eqref{B(nu)} and define
 \begin{equation}\label{m+(A)}
\mathbf{m}_+(\nu) =\inf\{m>m_0:
m+\frac{\V(m)}{m}=A(\nu)\}.
\end{equation}
From Remark \ref{R1.3} we know that    $\mathbf{m}_+(\nu) \geq m_+$ is well defined. We will verify  that one can use \eqref{F(V)} to extend the domain of means to $(m_0, \mathbf{m}_+(\nu))$, preserving the pseudo-variance function. (The definition \eqref{m2v} of pseudo-variance is not directly  applicable beyond $m>m_+$, so we use an equivalent definition).

 \begin{theorem}\label{T:ext1}
Formula \eqref{F(V)} defines  the  family of probability measures
$\{ Q_m(dx): m\in (m_0,\mathbf{m}_+)\}$, parametrized by the mean $m=\int x Q_m(dx)$. The  Cauchy-Stieltjes transform  of the generating measure $\nu$ satisfies
 \eqref{G2V}  with $z$ given by \eqref{z2m} for all  $m\in (m_0,\mathbf{m}_+)$. In particular, if $\nu$ has finite first moment $m_0$ then  for $m\in (m_0,\mathbf{m}_+)$ the variance of $  Q_m(dx)$ is given by \eqref{VV2V}.
 \end{theorem}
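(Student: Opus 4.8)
The plan is to reduce every assertion of the theorem to the single Cauchy-transform identity \eqref{G2V}, and to obtain that identity on the enlarged interval by analytic continuation from the natural domain $(m_0,m_+)$, where it is already known. First I would rewrite the candidate measure conveniently. With $z=z(m)=m+\V(m)/m$ as in \eqref{z2m} one has $\V(m)+m(m-x)=m\bigl(z(m)-x\bigr)$, so \eqref{F(V)} becomes
$$Q_m(dx)=\frac{\V(m)/m}{z(m)-x}\,\nu(dx).$$
Hence $\int Q_m(dx)=\frac{\V(m)}{m}\,G_\nu(z(m))$, and the whole statement hinges on showing that $G_\nu(z(m))=m/\V(m)$, i.e. \eqref{G2V}, persists for every $m\in(m_0,\mathbf{m}_+)$.

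The first thing to pin down is that $z(m)>A(\nu)=\sup\operatorname{supp}\nu$ for all $m\in(m_0,\mathbf{m}_+)$; this is what makes $G_\nu(z(m))$ a finite positive integral and a real-analytic function of $m$. As $m\to m_0^+$ one has $\psi(m)\to 0^+$, hence by \eqref{m2v} $z(m)=1/\psi(m)\to+\infty$, so $z(m)>A$ near $m_0$. By the definition \eqref{m+(A)}, $\mathbf{m}_+$ is the first $m>m_0$ with $z(m)=A(\nu)$, so by continuity and the intermediate value theorem $z$ cannot reach or pass below $A$ anywhere on $(m_0,\mathbf{m}_+)$ without equalling $A$ first; thus $z(m)>A$ throughout. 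The only place continuity of $z$ could fail is $m=0$, but if $0$ lies in the range then necessarily $\V(0)=0$ (otherwise $\V(m)/m\to\pm\infty$ would drive $z$ below $A$ on one side of $0$), so $\V(m)/m$ has a removable singularity and $z$ is in fact real-analytic across $0$. Finally, $z-x\ge z-A>0$ on $\operatorname{supp}\nu$ gives $0<G_\nu(z(m))\le 1/(z(m)-A)<\infty$.

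The crux of the argument, and the step I expect to carry the real weight, is the analytic continuation of \eqref{G2V}. Since $\nu$ is fixed, $G_\nu$ is analytic on $\CC\setminus\operatorname{supp}\nu$, in particular real-analytic on $(A,\infty)$; by hypothesis $\V$ is real-analytic on $(m_0,\infty)$, and by the previous paragraph $m\mapsto z(m)$ maps $(m_0,\mathbf{m}_+)$ into $(A,\infty)$. Therefore both $m\mapsto G_\nu(z(m))\,\V(m)$ and $m\mapsto m$ are real-analytic on the connected interval $(m_0,\mathbf{m}_+)$, and they agree on the open subinterval $(m_0,m_+)$ where \eqref{G2V} already holds. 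By the identity theorem they agree on all of $(m_0,\mathbf{m}_+)$; dividing by $\V(m)$ at points where $\V(m)\ne0$ recovers \eqref{G2V}, and the value $m=0$ (if present) follows by continuity since there $\V(0)=0$.

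It remains to read off the conclusions from \eqref{G2V}. As $z(m)>A$ gives $G_\nu(z(m))>0$, the identity forces $m/\V(m)>0$, hence $\V(m)/m>0$ and $\V(m)\ne0$ for $m\ne0$ (else $G_\nu(z)$ would be infinite); together with $z(m)-x>0$ this makes $Q_m$ a nonnegative measure, and $\int Q_m=\frac{\V(m)}{m}G_\nu(z(m))=1$. For the mean, $\frac{x}{z-x}=-1+\frac{z}{z-x}$ yields $\int x\,Q_m(dx)=\frac{\V(m)}{m}\bigl(-1+z\,G_\nu(z)\bigr)=z-\frac{\V(m)}{m}=m$, so $m$ is the mean and distinct $m$ give distinct measures. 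Finally, if $\nu$ has finite first moment $m_0$, set $w=\V(m)/m=z-m$ and use the exact identity $\frac{(x-m)^2}{z-x}=(z-x)-2w+\frac{w^2}{z-x}$ (each term $\nu$-integrable under the moment assumption); integrating and applying \eqref{G2V} gives $\int(x-m)^2Q_m(dx)=w\bigl((z-m_0)-2w+w\bigr)=\frac{\V(m)}{m}(m-m_0)$, which is precisely \eqref{VV2V}.
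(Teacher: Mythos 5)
Your proof is correct, but it takes a genuinely different route from the paper's. You keep the mean parametrization throughout: after checking that $z(m)=m+\V(m)/m$ stays strictly above $A(\nu)$ on all of $(m_0,\mathbf{m}_+)$ (which is exactly what definition \eqref{m+(A)} encodes), you analytically continue the product identity $\V(m)\,G_\nu(z(m))=m$ from $(m_0,m_+)$, where it is known from \cite{Bryc-Hassairi-09}, to the whole interval, and then read off nonnegativity, total mass one, the mean, and \eqref{VV2V} by direct integration against $\frac{1}{z-x}$; those computations all check out. The paper instead returns to the natural parameter: when $A(\nu)<0$ the transform $M(\theta)$ also converges on $\left(-\infty,1/A(\nu)\right)$, where $1-\theta x<0$ on $\mathrm{supp}(\nu)$, so $P_\theta$ is still a probability measure there; the authors show $k(\theta)$ is strictly increasing on that ray (differentiation under the integral plus a Cauchy--Schwarz type inequality), compute $\lim_{\theta\to-\infty}k(\theta)=m_+$ and $\lim_{\theta\to 1/A(\nu)}k(\theta)=\mathbf{m}_+$, and define $\psi$ and $\V$ on $(m_+,\mathbf{m}_+)$ by inverting $k$. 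Their construction makes positivity and normalization automatic, exhibits the new $Q_m$ as the measures $P_\theta$ for the second branch of $\Theta$, and identifies $\mathbf{m}_+$ intrinsically, at the cost of treating the seam $m=m_+$ (where $\theta\to\pm\infty$) separately; your argument is shorter and delivers the mean and variance identities in one stroke, but leans entirely on the standing hypothesis that $\V$ extends real-analytically and does not by itself reveal the $\theta$-form of the extended measures. One small quibble: your parenthetical reason that $\V(m)\neq 0$ ``else $G_\nu(z)$ would be infinite'' is not quite right --- the correct reason is that the continued identity $\V(m)G_\nu(z(m))=m$ together with the finiteness of $G_\nu(z(m))$ forces $\V(m)\neq 0$ for $m\neq 0$ --- but the conclusion and the rest of the argument stand.
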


The rest of this section contains proof of Theorem \ref{T:ext1}.

We consider the set $\Theta$ for which the transform \eqref{M(theta)} exists.


In fact, if $A(\nu)\geq0$, then $\Theta=(0,\theta_+)$ with
$\theta_+= \frac{1}{B}$, and if $A(\nu)<0$, then
\begin{equation} \label{BigTheta}
\Theta=\left(-\infty, \frac{1}{A(\nu)}\right)
\cup\left(0,\infty\right).
\end{equation}
One can always write
$$\Theta=\left(0\ ,\displaystyle\frac{1}{B}\right)\cup
\left(\displaystyle\frac{sign(A(\nu))}{B},\displaystyle\frac{1}{A(\nu)}\right)$$
with
$$sign(A(\nu))=\left\{%
\begin{array}{ll}
    1, \ \textrm{if}\ \ A(\nu)\geq0 & \hbox{;} \\
    -1, \ \textrm{if}\ \ A(\nu)<0 & \hbox{.} \\
\end{array}%
\right.   $$
One can then define the first extension of
$\mathcal{K}_+(\nu)$ as
$$\overline{\mathcal{K}}_+(\nu)=\{P_\theta(dx)=\frac{1}{M(\theta)(1-\theta x)}\nu(dx)\ ;\
\theta\in(\displaystyle\frac{sign(A(\nu))}{B},\displaystyle\frac{1}{A(\nu)})\cup(0,\displaystyle\frac{1}{B})\}.$$
Note that $\overline{\mathcal{K}}_+(\nu)={\mathcal{K}}_+(\nu)$
when $A(\nu)\geq0$, because in this case
$\left(\displaystyle\frac{sign(A(\nu))}{B},\displaystyle\frac{1}{A(\nu)}\right)=\emptyset$. Therefore,
the first extension is non-trivial only when $A(\nu)<0$.
\begin{proposition}
Suppose $A(\nu)<0$. For $\theta\in\Theta=\left(-\infty,
\frac{1}{A(\nu)}\right)\cup(0,\infty)$ the mean
\begin{equation}
k(\theta)=\displaystyle\int x
P_\theta(dx)=\frac{M(\theta)-1}{\theta M(\theta)},
\end{equation}
is strictly increasing on $(0,\infty)$ and on
$\left(-\infty,\frac{1}{A(\nu)}\right)$
\end{proposition}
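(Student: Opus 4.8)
The plan is to prove strict monotonicity on each interval by showing $k'(\theta)>0$, reducing the sign of $k'$ to a variance that is positive precisely because $\nu$ is non-degenerate. First I would rewrite \eqref{L2m} as $k(\theta)=\frac1\theta-\frac{1}{\theta M(\theta)}$ and differentiate, which gives
\[
k'(\theta)=-\frac1{\theta^2}+\frac{M(\theta)+\theta M'(\theta)}{\theta^2M(\theta)^2}=\frac{M(\theta)+\theta M'(\theta)-M(\theta)^2}{\theta^2M(\theta)^2},
\]
where $M'(\theta)=\int \frac{x}{(1-\theta x)^2}\,\nu(dx)$. The key algebraic step is the pointwise identity $\frac{1}{1-\theta x}+\frac{\theta x}{(1-\theta x)^2}=\frac{1}{(1-\theta x)^2}$, which upon integration yields $M(\theta)+\theta M'(\theta)=\int\frac{1}{(1-\theta x)^2}\,\nu(dx)$.

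With this the numerator of $k'(\theta)$ becomes
\[
\int\frac{1}{(1-\theta x)^2}\,\nu(dx)-\left(\int\frac{1}{1-\theta x}\,\nu(dx)\right)^2,
\]
which is exactly the variance under $\nu$ of the function $x\mapsto\frac{1}{1-\theta x}$. Since this function is strictly monotone for $\theta\neq0$, it is non-constant whenever $\nu$ is non-degenerate, so the variance is strictly positive. As the denominator $\theta^2M(\theta)^2$ is positive whenever $\theta\neq0$ and $M(\theta)\neq0$, I would conclude $k'(\theta)>0$, hence that $k$ is strictly increasing on each interval.

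It then remains to verify the standing regularity on each interval, and this bookkeeping on the extended interval is where I expect the only real difficulty. On $(0,\infty)$ (the natural domain, since $A(\nu)<0$ forces $B=0$ and $\theta_+=\infty$) one has $1-\theta x\geq 1-\theta A>1$ on $\operatorname{supp}(\nu)\subseteq(-\infty,A]$, so all integrands are bounded, $M(\theta)\in(0,1)$, and differentiation under the integral sign is justified by domination on compact subintervals. On $(-\infty,1/A)$ the map $x\mapsto 1-\theta x$ is increasing with maximal value $1-\theta A<0$ at $x=A$, so $1-\theta x\leq 1-\theta A<0$ throughout the support; hence the integrands are again uniformly bounded on compact subintervals, $M(\theta)<0$, and in particular $M(\theta)\neq0$. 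Once these finiteness, non-vanishing, and differentiability facts are in place, the variance computation applies verbatim on both intervals and yields $k'>0$.
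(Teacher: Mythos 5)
Your proposal is correct and follows essentially the same route as the paper: differentiate $k$, justify differentiation under the integral by domination on compact subintervals, and identify the numerator of $k'(\theta)$ as $\int\frac{1}{(1-\theta x)^2}\,\nu(dx)-\bigl(\int\frac{1}{1-\theta x}\,\nu(dx)\bigr)^2$, a variance under $\nu$. If anything, your observation that this variance is \emph{strictly} positive (because $x\mapsto\frac{1}{1-\theta x}$ is injective and $\nu$ is non-degenerate) is a small improvement, since the paper's displayed inequality only records $\geq 0$ while the proposition asserts strict monotonicity.
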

\begin{proof}
It is known  (\cite{Bryc-Hassairi-09}) that the function $k(.)$ is strictly increasing on
$(0,\infty)$, we will use the same reasoning to
show that it is also increasing on
$\left(-\infty, \frac{1}{A(\nu)}\right)$.
 We first observe that for
$\theta\in\left(-\infty,\frac{1}{A(\nu)}\right)$, the expression
$(1-\theta x)$ is negative for all $x$ in the support of $\nu$. In
fact, $x<A(\nu)$ implies that $\theta x>\theta A(\nu)>1$, that is
$1-\theta x<1-\theta A(\nu)<0$. Hence
\begin{eqnarray*}
\displaystyle\int\frac{|x|}{(1-\theta x)^2}\nu(dx) & = &
\frac{1}{|\theta|}\displaystyle\int\frac{|\theta x-1+1|}{(1-\theta
x)^2}\nu(dx)\\ & \leq &
(-\frac{1}{\theta})\displaystyle\int\frac{|\theta x-1|}{(1-\theta
x)^2}\nu(dx)+
(-\frac{1}{\theta})\displaystyle\int\frac{1}{(1-\theta
x)^2}\nu(dx)\\ & \leq &
\frac{M(\theta)}{\theta}+(-\frac{1}{\theta})\frac{M(\theta)}{1-\theta
A(\nu)}<\infty.
\end{eqnarray*}
Now fix $-\infty<\alpha<\beta<\displaystyle\frac{1}{A(\nu)}$. For
$x\in supp(\nu)\subset(-\infty,0)$, the function
$$\theta\longmapsto \displaystyle\frac{\partial}{\partial
\theta}\left(\frac{1}{1-\theta x}\right)=\displaystyle\frac{x}{(1-\theta
x)^2}$$ is decreasing on $\left(-\infty, \frac{1}{A(\nu)}\right)$,
so for all $\theta\in[\alpha,\ \beta]$,
$$\displaystyle\frac{x}{(1-\beta
x)^2}\leq\displaystyle\frac{x}{(1-\theta
x)^2}\leq\displaystyle\frac{x}{(1-\alpha x)^2}.$$ We define for
$x\in supp(\nu)$
$$g(x)=\displaystyle\frac{|x|}{(1-\alpha x)^2}+\displaystyle\frac{|x|}{(1-\beta x)^2}.$$
Then $g\geq0$, and  $g$ is $\nu$-integrable, because $\alpha$ and
$\beta$ are in $\left(-\infty,\displaystyle\frac{1}{A(\nu)}\right)$, and
$\displaystyle\frac{\partial}{\partial \theta}\left(\frac{1}{1-\theta
x}\right)=\displaystyle\frac{x}{(1-\theta x)^2}\leq g(x),$ for all
$\theta\in[\alpha,\ \beta]$. Thus, one can differentiate
$M(\theta)$ under the integral sign and formula
 \eqref{L2m}
 gives
$$k'(\theta)=\frac{M(\theta)+\theta M'(\theta)-M(\theta)^2}{(\theta M(\theta))^2}.$$
The fact that
$$M(\theta)+\theta M'(\theta)-M(\theta)^2=\displaystyle\int\displaystyle\frac{1}{(1-\theta
x)^2}\nu(dx)-(\displaystyle\int\displaystyle\frac{1}{1-\theta
x}\nu(dx))^2\geq0$$ implies that the function $\theta\longmapsto
k(\theta)$ is increasing on
$\left(-\infty,\displaystyle\frac{1}{A(\nu)}\right).$
\end{proof}
We have that
\begin{eqnarray*}
\displaystyle\lim_{\theta\to-\infty} k(\theta) & =&
\displaystyle\lim_{\theta\to
-\infty}\displaystyle\frac{M(\theta)-1}{\theta M(\theta)}\\
& =& \displaystyle\lim_{\theta\to
-\infty}\displaystyle\frac{\frac{1}{\theta}G_\nu(\frac{1}{\theta})-1}{G_\nu(\frac{1}{\theta})}\\
& = & 0-\frac{1}{G_\nu(0)}=B-\frac{1}{G_\nu(B)}=m_+.
\end{eqnarray*}
For the proof of Theorem \ref{T:ext1} instead of using \eqref{m+(A)}, we define
\begin{equation}\label{M+(k)}
\mathbf{m}_+(\nu)=\displaystyle\lim_{\theta\to
\frac{1}{A(\nu)}} k(\theta).
\end{equation}
(We will later verify that  this coincides with \eqref{m+(A)} when $A(\nu)<0$.)
 Then, the function $k(.)$ realizes a
bijection from $(-\infty,\displaystyle\frac{1}{A(\nu)})$ onto its
image $(m_+,\mathbf{m}_+(\nu)).$
 We then define the function $\psi$
on $(m_0,m_+)$ as the inverse of the restriction of $k(.)$ to
$(0,\infty)$, and on $(m_+,\mathbf{m}_+(\nu))$ as
the inverse of the restriction of $k(.)$ to
$\left(-\infty,\displaystyle\frac{1}{A(\nu)}\right)$.
This leads to the parametrization by the mean
$m\in(m_0,m_+)\cup(m_+,\mathbf{m}_+(\nu))$ of the family
$\overline{\mathcal{K}}_+(\nu)$. The definition of the
pseudo-variance function can also be extended using the function
$\psi$. Following \eqref{m2v},  we define $\V(.)$ for
$m\in(m_0,m_+)\cup(m_+, \mathbf{m}_+(\nu))$ as
$$\V(m)=m\left(\frac{1}{\psi(m)}-m\right).$$
We have that $$\displaystyle\lim_{m\longrightarrow
(m_+)^-}\displaystyle\frac{1}{\psi(m)}=0=\displaystyle\lim_{m\longrightarrow
(m_+)^+}\displaystyle\frac{1}{\psi(m)},$$ so that we define $\V(.)$ at
$m_+$ by $\V(m_+)=-m_+^2$.
Note that  $Q_{m_+}(dx)=\frac{m_+}{x}\nu(dx)$ is well defined for $A(\nu)<0$.

The explicit parametrization by the means of the enlarged family
can then be given by
$$\overline{\mathcal{K}}_+(\nu)=\{Q_m(dx)=\frac{\V(m)}{\V(m)+m(m-x)}\nu(dx)
\ ;\ m\in(m_0, \mathbf{m}_+(\nu))\}.$$

 The function
$m\longmapsto\psi(m)=\displaystyle\frac{1}{\V(m)/m+m}$ is
increasing on $(m_+,\mathbf{m}_+(\nu))$, so the function
$m\longmapsto\V(m)/m+m$ is decreasing on $(m_+,\mathbf{m}_+(\nu))$
and
$$\displaystyle\lim_{m\longrightarrow\mathbf{m}_+(\nu)}\V(m)/m+m=A(\nu).$$
This implies that  \eqref{m+(A)} holds when $A(\nu)<0$.

If $A(\nu)\geq0$, then \eqref{m+(A)} gives $\mathbf{m}_+(\nu)=m_+$ because
$m_++\displaystyle\frac{\V(m_+)}{m_+}=\displaystyle\frac{1}{\theta_+}=B=A(\nu)$,
and then $\overline{\mathcal{K}}_+(\nu)=\mathcal{K}_+(\nu)$. This ends the proof of Theorem \ref{T:ext1}.


\section{The second extension}
As indicated by Examples \ref{Ex:semicircle} and  \ref{Ex:3.3},
family  $\bar{ \mathcal{K}}_+(\nu)$ may have a further extension.
Define
\begin{equation}\label{mmm+}
\mathbf{M}_+=\inf\{m>m_0: \V(m)/m<0\}.
\end{equation}
From  Remark  \ref{R1.3}(iii)  it is clear that $\mathbf{M}_+\geq
{m}_+$.  In fact,  $\mathbf{M}_+\geq \mathbf{m}_+$. This can be
seen from \eqref{m+(A)}: since the mean must be smaller than
$A(\nu)$ we have $\mathbf{m}_+ \leq A(\nu)$, so $\V(m)/m\geq 0$
for all $m<\mathbf{m}_+$.

 It is easy to see that
$\mathbf{M}_+=\infty>\mathbf{m}_+$ in Example \ref{Ex:semicircle}
and in Example \ref{Ex:3.3} while $\mathbf{M}_+=\mathbf{m}_+=m_+$ in
Example \ref{Ex:3.2}.

 We now introduce the second extension of the  family $ \mathcal{K}_+(\nu)$ as the family of measures
 $$\overline{\overline{\mathcal{K}}}_+(\nu)=\{\overline Q_m(dx): \; m_0<m<\mathbf{M}_+(\nu)\},$$
 with $\overline Q_m$ given by
\begin{equation}\label{Q++}
\overline Q_m(dx)=\frac{\V(m)}{\V(m)+m(m-x)}\nu(dx)+ p(m)
\delta_{m+\V(m)/m},
\end{equation}
 where the weight of the atom is
 $$
 p(m)=\begin{cases}
 0 & \mbox{ if $m<m_+:=B-\frac{1}{G_\nu(B)}$}\\
 1-\frac{\V(m)}{m} G_\nu\left(m+\frac{\V(m)}{m}\right) & \mbox{ if $m>m_+$ and $\V(m)/m\geq 0$}
\end{cases}
 $$
 Since   formula \eqref{G2V} holds for all $m\in(m_0,\mathbf{m}_+)$, it is clear that
 $\bar{ \mathcal{K}}_+(\nu)\subset \overline{\overline{\mathcal{K}}}_+(\nu)$.
 We now verify that the extension satisfies desired conditions.
\begin{theorem}\label{T:ext2}
 Let $\mathbf{m}_+<m<\mathbf{M}_+$. Then \eqref{Q++} defines a probability measure
$ \overline Q_m(dx)$   with mean $m$, and if $\nu$ has finite first moment  $m_0$ then the
variance of $\overline Q_m$ is
\begin{equation}\label{Q-Var-ext2}
\int (x-m)^2 \overline Q_m(dx)=\frac{(m-m_0)\V(m)}{m}.
\end{equation}

 \end{theorem}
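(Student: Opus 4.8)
The plan is to push everything through the single atom location $z=z(m)=m+\V(m)/m$ and the value $G_\nu(z)$ of the Cauchy transform there. On $(\mathbf{m}_+,\mathbf{M}_+)$ we have $\V(m)/m\ge 0$ by \eqref{mmm+}, so $z-m=\V(m)/m\ge 0$, and (the delicate geometric point, discussed last) $z>A(\nu)$; hence $G_\nu(z)$ is finite and the absolutely continuous part of \eqref{Q++} rewrites as
$$\frac{\V(m)}{\V(m)+m(m-x)}\,\nu(dx)=\frac{z-m}{z-x}\,\nu(dx),$$
a positive measure since $z-x>0$ on $\mathrm{supp}(\nu)\subseteq(-\infty,A(\nu)]$.

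First I would evaluate the three elementary integrals of this a.c. part using the partial fractions $\frac{x}{z-x}=-1+\frac{z}{z-x}$ and $\frac{x^2}{z-x}=-(z+x)+\frac{z^2}{z-x}$. This gives mass $(z-m)G_\nu(z)$, first moment $(z-m)(zG_\nu(z)-1)$, and, when $m_0=\int x\,\nu(dx)$ is finite, second moment $(z-m)(z^2G_\nu(z)-z-m_0)$. Adding the atom at $z$ with weight $p(m)=1-(z-m)G_\nu(z)$ and using $z-m=\V(m)/m$, every term carrying $G_\nu(z)$ cancels: the total mass is $1$, the mean telescopes to $m$, and the second moment collapses to $z(m-m_0)+m m_0$, so the variance is $(z-m)(m-m_0)=(m-m_0)\V(m)/m$, which is \eqref{Q-Var-ext2}. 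This is a routine but forced computation.

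The positivity of the extra mass $p(m)$ is the crux. I would record the identity
$$p(m)=1-(z-m)G_\nu(z)=\big(m-\Phi(z)\big)\,G_\nu(z),\qquad \Phi(z):=z-\frac{1}{G_\nu(z)},$$
and note that $\Phi(z)$ is precisely the mean of the probability measure $\nu(dx)/\big((z-x)G_\nu(z)\big)$, supported in $(-\infty,A(\nu)]$; as $\nu$ is non-degenerate, $\Phi(z)<A(\nu)$. Moreover $\Phi$ is non-increasing, since $\Phi'(z)=1+G_\nu'(z)/G_\nu(z)^2\le 0$ by Cauchy--Schwarz ($G_\nu(z)^2\le -G_\nu'(z)=\int (z-x)^{-2}\nu(dx)$). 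Finally, via the substitution $\theta=1/z$ one checks $k(\theta)=\Phi(z)$, so $\Phi(A^+)=\mathbf{m}_+$ is the limit \eqref{M+(k)}. Hence for $z\ge A(\nu)$ we get $\Phi(z)\le\Phi(A^+)=\mathbf{m}_+<m$, i.e. $(z-m)G_\nu(z)<(z-\Phi(z))G_\nu(z)=1$, so $p(m)>0$.

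The remaining input, and the step I expect to cost the most, is the geometric claim $z(m)>A(\nu)$ throughout $(\mathbf{m}_+,\mathbf{M}_+)$, which is exactly what makes $G_\nu(z)$ finite and the density positive. For $m\ge A(\nu)$ it is immediate from $z\ge m$; the delicate range is $m\in(\mathbf{m}_+,A(\nu))$, nonempty because $\mathbf{m}_+=\Phi(A^+)<A(\nu)$. There I would invoke the assumed real-analytic extension of $\V$ together with \eqref{m+(A)}: differentiating the first-extension identity $\Phi(z(m))=m$ gives $z'(m)=1/\Phi'(z(m))\to 0$ as $m\uparrow\mathbf{m}_+$ (the square-root type edge forces $G_\nu'(A^+)=-\infty$, hence $\Phi'(A^+)=-\infty$), so $m\mapsto z(m)$ reaches the value $A(\nu)$ at $\mathbf{m}_+$ as a minimum and climbs back above $A(\nu)$ just past it, exactly as computed in Example \ref{Ex:3.3}. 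Turning this minimality into a clean general argument, rather than verifying it case by case, is the one place requiring real care.
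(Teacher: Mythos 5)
Your moment computations are correct and are essentially the paper's own: the paper likewise integrates $1$, $x$ and $x(x-m)$ against the density $\V(m)/(\V(m)+m(m-x))$ by the same partial-fraction reduction to $G_\nu(z)$ at $z=m+\V(m)/m$, then adds the atom to obtain total mass $1$, mean $m$, and variance $(m-m_0)\V(m)/m$. Your argument that $p(m)>0$ --- via $\Phi(z)=z-1/G_\nu(z)$ being the mean of $\nu(dx)/\bigl((z-x)G_\nu(z)\bigr)$, the Cauchy--Schwarz bound $\Phi'\le 0$, and $\Phi(z)\le\Phi(A(\nu)^+)=\mathbf{m}_+<m$ --- is sound and is in fact more explicit than the paper, which never isolates the positivity of the atom's weight; in the paper it is implicit in the level-set correspondence described below (with $m_1=\Phi(z)$ your inequality reads $p=(\overline m-m_1)/\bigl(h(m_1)-m_1\bigr)>0$).

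The genuine gap is the one you flag yourself: the claim that $z(m)>A(\nu)$ for all $m\in(\mathbf{m}_+,\mathbf{M}_+)$. Your proposed route --- $z'(m)\to 0$ as $m\uparrow\mathbf{m}_+$ because ``the square-root type edge forces $G_\nu'(A^+)=-\infty$'' --- fails for a general $\nu$ with support bounded above: nothing in the hypotheses forces $\int(A(\nu)-x)^{-2}\nu(dx)=\infty$, and when that integral is finite $\Phi'(A^+)$ is finite and strictly negative (Cauchy--Schwarz is strict for non-degenerate $\nu$), so the analytic continuation of $z(\cdot)$ crosses the level $A(\nu)$ transversally at $\mathbf{m}_+$ and dips below it just past $\mathbf{m}_+$, while $\V(m)/m=z(m)-m$ can stay positive there; thus \eqref{mmm+} alone does not save you, and \eqref{m+(A)} only controls $z(m)\neq A(\nu)$ for $m<\mathbf{m}_+$. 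The paper sidesteps this entirely: it does not use the analytic continuation of $\V$ on $(\mathbf{m}_+,\mathbf{M}_+)$, but instead defines the extended pseudo-variance there through the level sets $\mathcal{V}_{m_1}=\{m\ge\mathbf{m}_+:\,h(m)=h(m_1)\}$ of $h(m)=m+\V(m)/m$ and the decreasing bijection $m_1\mapsto\overline m=\inf\mathcal{V}_{m_1}$, so that $z(\overline m)=h(\overline m)=h(m_1)=1/\psi(m_1)>A(\nu)$ holds by construction (Remark \ref{R1.3}(ii) carried through the first extension). To keep your formulation with an a priori analytic extension of $\V$, you must either add $z(m)>A(\nu)$ on $(\mathbf{m}_+,\mathbf{M}_+)$ as a hypothesis or prove that the analytic continuation agrees with the paper's level-set extension; neither is automatic.
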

Here the use of $\V(m)$ is based on the assumption the
pseudo-variance function $\V$ extends as a real analytic function to
$(m_0,+\infty)$. We will show later the definition of the
pseudo-variance function $\V$ may extended to
$\mathbf{m}_+<m<\mathbf{M}_+$.

Since Marchenko-Pastur law is free-infinitely divisible, from \cite[Example 4.1]{Bryc-06-08}  one can see that there is no "one simple formula"
for $m_+(\nu)$ under the free convolution power.  On the other hand, the domain of means for exponential families scales nicely under classical convolution power, and it is satisfying to note that the extended domain of means lead to the analogous formula:
\begin{equation}
\mathbf{M}_+(\nu^{\boxplus \alpha})=\alpha \mathbf{M}_+(\nu).
\end{equation}
Indeed, since  $\V_{\nu^{\boxplus
\alpha}}(m)=\alpha \V_\nu(m/\alpha)$, see
\cite[(3.17)]{Bryc-Hassairi-09},
the result follows from \eqref{mmm+}.

The rest of this section contains proof of Theorem \ref{T:ext2}.
In the proof,  we
focus on the behavior of the function
\begin{equation}\label{h}
h(m)=\displaystyle\frac{\V(m)}{m}+m, \ \textrm{for} \ m>\mathbf{m}_+(\nu),
\end{equation}
where $\mathbf{m}_+(\nu)$ is defined by \eqref{m+(A)}.
In order to
make clear the idea, we first study some examples,
\begin{example}  The Wigner's semicircle (free Gaussian) law.
$$\nu(dx)=\displaystyle\frac{\sqrt{4-x^2}}{2\pi}1_{(-2,2)}(x)dx ,$$
has a constant variance function
 $v(m)=1=\V(m)$ and the (one-sided) domain of means is
 $D_+(\nu)=(0,1),$ and $\Theta=(0,\theta_+)=(0,1/2)$.\\
$\mathbf{m}_+(\nu)=m_+=1$. We observe that, for
$m<\mathbf{m}_+(\nu)$, there exists a unique
$\overline{m}\geq\mathbf{m}_+(\nu), $ such that
$$\frac{\V(\overline{m})}{\overline{m}}+\overline{m}=\frac{\V(m)}{m}+m.$$
In fact,
$\overline{m}=\displaystyle\frac{1}{m}=\displaystyle\frac{\mathbf{m}^2_+(\nu)}{m}$.
\end{example}
\begin{example}
The (absolutely continuous) Marchenko-Pastur law
 $$\nu(dx)=\displaystyle\frac{\sqrt{4-(x-a)^2}}{2\pi(1+ax)}1_{(a-2,a+2)}(x)dx$$
 with $0<a^2<1$. The variance function is  $v(m)=1+am=\V(m)$, and
the domain of means is $D_+(\nu)=(0,1),$ and
$\Theta=(0,\theta_+)=(0,1/2)$.\\ $\mathbf{m}_+(\nu)=m_+=1$. We
also observe that for $m<\mathbf{m}_+(\nu)$, there exists a unique
$\overline{m}=\displaystyle\frac{1}{m}=\displaystyle\frac{\mathbf{m}^2_+(\nu)}{m}\geq\mathbf{m}_+(\nu),
$ such that
$$\frac{\V(\overline{m})}{\overline{m}}+\overline{m}=\frac{\V(m)}{m}+m.$$
\end{example}
\begin{example}The free strict arcsine law
$$\nu(dx)=\displaystyle\frac{\sqrt{3-4x}}{2\pi(1+x^2)}1_{(-\infty,3/4)}(x)dx $$
has pseudo-variance function $\V(m)=m(1+m^2)$, the
domain of means $ D_+(\nu)=(-\infty,-1/2)$ and $\Theta=(0,\theta_+)=(0,4/3)$.\\
$\mathbf{m}_+(\nu)=m_+=-1/2$, and for $m<\mathbf{m}_+(\nu)$, there
exists a unique
$\overline{m}=-m-1=-m+2\mathbf{m}_+(\nu)\geq\mathbf{m}_+(\nu), $
such that
$$\frac{\V(\overline{m})}{\overline{m}}+\overline{m}=\frac{\V(m)}{m}+m.$$
\end{example}
 \begin{example}[compare Example \ref{Ex:3.3}] The inverse semicircle law
$$\nu(dx)=\displaystyle\frac{p\sqrt{-p^2-4x}}{2\pi x^2}1_{(-\infty,-p^2/4)}(x)dx ,$$
has the pseudo-variance function  $\V(m)=m^3/p^2$, and the domain of
means is $D_+(\nu)=(-\infty,-p^2)$. Consider the inverse semicircle
law with $p=1$.
$\Theta=(-\infty,-4)\cup(0,+\infty)=(-\infty,1/A(\nu))\cup(0,+\infty)$.\\
$\mathbf{m}_+(\nu)=-1/2>m_+=-1$. For $m\leq\mathbf{m}_+(\nu)$,
there exists a unique
$\overline{m}=-m-1=-m+2\mathbf{m}_+(\nu)\geq\mathbf{m}_+(\nu), $
such that
$$\frac{\V(\overline{m})}{\overline{m}}+\overline{m}=\frac{\V(m)}{m}+m.$$
\end{example}
\begin{example}The free Ressel law
$$\nu(dx)=\displaystyle\frac{-1}{\pi x\sqrt{-1-x}}1_{(-\infty,-1)}(x)dx $$
has domain of means  $D_+(\nu)=(-\infty,-2)$, the
pseudo-variance function $\V(m)=m^2(m+1)$, and $\Theta=(-\infty,-1)\cup(0,+\infty)=(-\infty,1/A(\nu))\cup(0,+\infty)$.\\
$\mathbf{m}_+(\nu)=-1>m_+=-2$. For $m\leq\mathbf{m}_+(\nu)$, there
exists a unique
$\overline{m}=-m-2=-m+2\mathbf{m}_+(\nu)\geq\mathbf{m}_+(\nu), $
such that
$$\frac{\V(\overline{m})}{\overline{m}}+\overline{m}=\frac{\V(m)}{m}+m.$$
\end{example}

\subsection{Proof of Theorem \ref{T:ext2}}

Without loss of generality we suppose that $\mathbf{m}_+(\nu)<+\infty$.
\begin{definition}
For $m_1\in(m_0,\mathbf{m}_+(\nu))$, we define the set
$$\mathcal{V}_{m_1}=\{m\geq\mathbf{m}_+(\nu)\ ;\
\frac{\V(m)}{m}+m=\frac{\V(m_1)}{m_1}+m_1\}.$$
\end{definition}
 Since $\V$ is assumed analytic,  $\mathcal{V}_{m}$ is a (possibly empty) countable set with no accumulation points.

\begin{proposition}
 If for $m_1\in(m_0,\mathbf{m}_+(\nu))$,
 $\mathcal{V}_{m_1}\neq\emptyset$, then for $m$ such that $m_1\leq m\leq
 \mathbf{m}_+(\nu)$, $\mathcal{V}_{m}\neq\emptyset$.
\end{proposition}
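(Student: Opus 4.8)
The plan is to reduce the whole statement to a single application of the intermediate value theorem to the function $h(m)=\V(m)/m+m$ introduced in \eqref{h}. First I would record the two facts about $h$ on the left interval that are already contained in the proof of Theorem \ref{T:ext1}. By \eqref{m2v} we have $h(m)=1/\psi(m)$, and the computations there (strict monotonicity of $k$, hence of $\psi$, on $(0,\infty)$ and on $(-\infty,1/A(\nu))$, together with the matching one-sided limits at $m_+$) show that $h$ is continuous and strictly decreasing on all of $(m_0,\mathbf{m}_+(\nu))$, with $\lim_{m\to\mathbf{m}_+(\nu)^-}h(m)=A(\nu)$. Extending by $h(\mathbf{m}_+(\nu))=A(\nu)$ makes $h$ continuous and strictly decreasing on $(m_0,\mathbf{m}_+(\nu)]$. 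This is the quantitative input; the rest is soft.

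Next I would use the hypothesis. Since $\mathcal{V}_{m_1}\neq\emptyset$, fix a point $m^\ast\in\mathcal{V}_{m_1}$, so that $m^\ast\geq\mathbf{m}_+(\nu)$ and $h(m^\ast)=h(m_1)$. Because $m_1<\mathbf{m}_+(\nu)$ and $h$ is strictly decreasing on $(m_0,\mathbf{m}_+(\nu)]$, we have $h(m_1)>A(\nu)=h(\mathbf{m}_+(\nu))$, so in particular $m^\ast>\mathbf{m}_+(\nu)$ and the interval $[\mathbf{m}_+(\nu),m^\ast]$ is nondegenerate. On this interval $h$ is continuous (here one uses that $\V$ is real analytic on $(m_0,+\infty)$), with endpoint values $h(\mathbf{m}_+(\nu))=A(\nu)$ and $h(m^\ast)=h(m_1)$.

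Now take any $m$ with $m_1\leq m\leq\mathbf{m}_+(\nu)$. Monotonicity of $h$ on $(m_0,\mathbf{m}_+(\nu)]$ gives
$$A(\nu)=h(\mathbf{m}_+(\nu))\ \leq\ h(m)\ \leq\ h(m_1)=h(m^\ast),$$
so the value $h(m)$ lies between the two endpoint values of $h$ on $[\mathbf{m}_+(\nu),m^\ast]$. The intermediate value theorem then produces $\widetilde m\in[\mathbf{m}_+(\nu),m^\ast]$ with $h(\widetilde m)=h(m)$, i.e. $\V(\widetilde m)/\widetilde m+\widetilde m=\V(m)/m+m$. Since $\widetilde m\geq\mathbf{m}_+(\nu)$, this $\widetilde m$ belongs to $\mathcal{V}_m$, which is therefore nonempty, as required.

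The only genuinely delicate point, and the one I would flag as the main obstacle, is the continuity of $h$ across the possible value $m=0$ when $0\in[\mathbf{m}_+(\nu),m^\ast]$ (which can occur only if $A(\nu)<0$). There $\V(m)/m$ looks singular, but using \eqref{G2V} in the form $\V(m)/m=1/G_\nu(h(m))$ and the fact that $G_\nu$ is finite and nonvanishing at real points to the right of $A(\nu)$, one sees that $\V(0)=0$ and that $\V(m)/m$ stays bounded near $0$; hence the apparent singularity is removable and $h$ is in fact continuous on $[\mathbf{m}_+(\nu),m^\ast]$. Once this is noted, every step above is a routine consequence of the monotonicity and boundary value already established in the proof of Theorem \ref{T:ext1}, so no new estimate is needed.
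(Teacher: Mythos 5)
Your proof is correct and follows essentially the same route as the paper's: both apply the intermediate value theorem to $h(m)=\V(m)/m+m$ on the interval from $\mathbf{m}_+(\nu)$ (where $h=A(\nu)$) to a witness point of $\mathcal{V}_{m_1}$, using the monotonicity of $h$ to the left of $\mathbf{m}_+(\nu)$ to place $h(m)$ between the two endpoint values. Your additional remark on the removable singularity of $\V(m)/m$ at $m=0$ is a detail the paper leaves implicit, not a change of method.
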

\begin{proof}Consider the
function $h: m\longmapsto \V(m)/m+m$ and suppose that for
$m_1\in(m_0,\mathbf{m}_+(\nu))$, $\mathcal{V}_{m_1}\neq\emptyset$,
then there exists $m'_1\geq\mathbf{m}_+(\nu)$ such that
$$\V(m_1)/m_1+m_1=\V(m'_1)/m'_1+m'_1.$$
 We have that
$$h(\mathbf{m}_+(\nu))=A(\nu) \ \  \textrm{and}\ \ \
h(m'_1)=\V(m_1)/m_1+m_1.$$ For
$y\in(A(\nu),\V(m_1)/m_1+m_1)$, by continuity of $h$, there
exists $m'\in(\mathbf{m}_+(\nu),m'_1)$ such that
$y=h(m')=\V(m')/m'+m'$.\\
In other words, for all $m\in(m_1,\mathbf{m}_+(\nu))$, there
exists $m'\in(\mathbf{m}_+(\nu),m'_1)$ such that
$h(m)=y=h(m'),$ then $\mathcal{V}_{m}\neq\emptyset$.
\end{proof}
\begin{remark}  From this proposition, it follows that the set of
$m$ belonging to $(m_0,\mathbf{m}_+(\nu))$ such that
$\mathcal{V}_{m}\neq\emptyset$ is an interval.
\end{remark}

Define
$$\widetilde{\mathbf{m}}=\inf\{m\in(m_0,\mathbf{m}_+(\nu)),\ \
\textrm{such that}\ \ \mathcal{V}_{m}\neq\emptyset\}.$$ For
$m\in(\widetilde{\mathbf{m}},\mathbf{m}_+(\nu))$ let
$$\overline{m}=\inf\{\mathcal{V}_{m}\}.$$
Note that it may happen that  $\mathcal{V}_{m}=\emptyset$ for all
$m\in(m_0,\mathbf{m}_+(\nu))$, see Example \ref{Ex:3.2}.   However,  when $\widetilde{\mathbf{m}}<\mathbf{m}_+(\nu)$ we have the following.

\begin{proposition}\label{P:h}
The function
$g:m\longmapsto\overline{m}$ is (strictly) decreasing  on
 $(\widetilde{\mathbf{m}},\mathbf{m}_+(\nu))$.
\end{proposition}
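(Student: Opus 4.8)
The plan is to deduce the monotonicity of $g$ entirely from the strict monotonicity of the auxiliary function $h(m)=\V(m)/m+m$ together with the Intermediate Value Theorem. I would begin by recalling, from the analysis of the first extension, that $h$ is continuous (indeed real analytic) and strictly decreasing on $(m_0,\mathbf{m}_+(\nu))$, with boundary value $\lim_{m\to\mathbf{m}_+(\nu)^-}h(m)=h(\mathbf{m}_+(\nu))=A(\nu)$. In particular $h(m)>A(\nu)$ for every $m\in(m_0,\mathbf{m}_+(\nu))$, and this single inequality is what will drive the whole argument.

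I would then fix $s<t$ in $(\widetilde{\mathbf{m}},\mathbf{m}_+(\nu))$ and set $y_s=h(s)$, $y_t=h(t)$. By the definition of $\widetilde{\mathbf{m}}$ together with the previous proposition, both $\mathcal{V}_s$ and $\mathcal{V}_t$ are nonempty; each is a level set of the continuous function $h$ intersected with $[\mathbf{m}_+(\nu),\infty)$, hence closed and bounded below, so the infima $\overline{s}=g(s)=\inf\mathcal{V}_s$ and $\overline{t}=g(t)=\inf\mathcal{V}_t$ are attained, giving $h(\overline{s})=y_s$ and $h(\overline{t})=y_t$. Strict monotonicity of $h$ yields $y_s>y_t$, and both exceed $A(\nu)$; note also $\overline{t}>\mathbf{m}_+(\nu)$ since $h(\mathbf{m}_+(\nu))=A(\nu)<y_t$.

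The crucial step, and the only place where genuine work is needed, is to show that $h(m)<y_t$ for every $m\in[\mathbf{m}_+(\nu),\overline{t})$. Indeed $h(\mathbf{m}_+(\nu))=A(\nu)<y_t$; and were $h(m'')>y_t$ for some such $m''$, then applying the Intermediate Value Theorem to $h$ on $[\mathbf{m}_+(\nu),m'']$, where $h(\mathbf{m}_+(\nu))<y_t<h(m'')$, would produce a point of $\mathcal{V}_t$ strictly below $\overline{t}$, contradicting the minimality of $\overline{t}$. Since moreover $h(\overline{t})=y_t<y_s$, it follows that $h(m)<y_s$ for all $m\in[\mathbf{m}_+(\nu),\overline{t}]$. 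Consequently no point of $\mathcal{V}_s=\{m\geq\mathbf{m}_+(\nu):h(m)=y_s\}$ can lie in $[\mathbf{m}_+(\nu),\overline{t}]$, so $g(s)=\overline{s}>\overline{t}=g(t)$. As $s<t$ were arbitrary in $(\widetilde{\mathbf{m}},\mathbf{m}_+(\nu))$, the function $g$ is strictly decreasing.

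The hard part is precisely the penultimate step, ruling out an early crossing of the level $y_t$: this is where the boundary value $h(\mathbf{m}_+(\nu))=A(\nu)$, lying strictly below every value $h(m)$ with $m<\mathbf{m}_+(\nu)$, and the continuity of $h$ are indispensable. Everything else reduces to bookkeeping with the strict monotonicity of $h$ on $(m_0,\mathbf{m}_+(\nu))$ already established for the first extension.
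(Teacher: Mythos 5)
Your argument is correct and is essentially the paper's own proof seen from the direct rather than the contrapositive side: both hinge on the strict decrease of $h(m)=\V(m)/m+m$ on $(m_0,\mathbf{m}_+(\nu))$, the boundary value $h(\mathbf{m}_+(\nu))=A(\nu)$ lying below every level $h(m)$ with $m<\mathbf{m}_+(\nu)$, and an Intermediate Value Theorem step that would otherwise produce an element of the level set $\mathcal{V}_t$ below its infimum. Your explicit ``no early crossing'' lemma is exactly the contradiction the paper derives when it assumes $\overline{m}_1<\overline{m}_2$, so no further comparison is needed.
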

\begin{proof}
Let $m_1, \ m_2\in(\widetilde{\mathbf{m}},\mathbf{m}_+(\nu))$ such
that $m_1<m_2$, the fact that the function $h$ from \eqref{h} is decreasing
on $(m_0,\mathbf{m}_+(\nu))$ implies that
$$\frac{\V(m_1)}{m_1}+m_1>\frac{\V(m_2)}{m_2}+m_2.$$
As $h(m_1)=h(\overline{m}_1)$ and
$h(m_2)=h(\overline{m}_2)$, we have that
\begin{equation}\label{(2.1)}
\frac{\V(\overline{m}_1)}{\overline{m}_1}+\overline{m}_1>\frac{\V(\overline{m}_2)}
{\overline{m}_2}+\overline{m}_2
\end{equation}
 and necessarily, we have
$\overline{m}_1>\overline{m}_2$.

Indeed, $\overline{m}_1=\overline{m}_2$ is not possible,  and if
$\overline{m}_1<\overline{m}_2$, then the inequality \eqref{(2.1)} and the
continuity of the function $h$ implies that there exists
$y<\overline{m}_1$ such that $\displaystyle\frac{\V(\overline{m}_2)}
{\overline{m}_2}+\overline{m}_2=h(y)=\displaystyle\frac{\V(m_2)}{m_2}+m_2$,
which is in contradiction with the fact that
$$\overline{m}_2=\inf\left\{m\geq\mathbf{m}_+(\nu)\ :\
\frac{\V(m)}{m}+m=\frac{\V(m_2)}{m_2}+m_2\right\}.$$

\end{proof}

We have $\overline{\mathbf{m}_+(\nu)}=\mathbf{m}_+(\nu)$, and set
$\displaystyle\widetilde{\mathbf{M}}=\lim_{m\longrightarrow\widetilde{\mathbf{m}}}\overline{m}$.

\begin{proposition}
The function
$h:\overline{m}\longmapsto\displaystyle\frac{\V(\overline{m})}
{\overline{m}}+\overline{m}$ is increasing on
$(\mathbf{m}_+(\nu),\widetilde{\mathbf{M}})$.
\end{proposition}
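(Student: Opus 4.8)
I want to show that $h:\overline{m}\longmapsto \V(\overline{m})/\overline{m}+\overline{m}$ is increasing on the interval $(\mathbf{m}_+(\nu),\widetilde{\mathbf{M}})$, where $\widetilde{\mathbf{M}}=\lim_{m\to\widetilde{\mathbf{m}}}\overline{m}$ and $\overline{m}=g(m)=\inf\{\mathcal{V}_m\}$ is the correspondence defined just above. The key observation is that $h$ has already been shown to be \emph{decreasing} on $(m_0,\mathbf{m}_+(\nu))$ (this is recorded in the proof of Proposition~\ref{P:h}, following from the monotonicity of $\psi$ in the construction of the first extension), and the map $g$ has just been shown to be strictly decreasing on $(\widetilde{\mathbf{m}},\mathbf{m}_+(\nu))$ in Proposition~\ref{P:h}. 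The guiding idea is therefore to transport monotonicity through the defining relation $h(g(m))=h(m)$.

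First I would recall the defining identity for $g$: by the very definition of $\mathcal{V}_m$ we have, for every $m\in(\widetilde{\mathbf{m}},\mathbf{m}_+(\nu))$,
\begin{equation}\label{key-id}
h(\overline{m})=h(g(m))=\frac{\V(m)}{m}+m=h(m).
\end{equation}
Now take two points $\overline{m}_1,\overline{m}_2\in(\mathbf{m}_+(\nu),\widetilde{\mathbf{M}})$ with $\overline{m}_1<\overline{m}_2$. Because $g$ is a strictly decreasing bijection from $(\widetilde{\mathbf{m}},\mathbf{m}_+(\nu))$ onto its image (which is exactly the interval $(\mathbf{m}_+(\nu),\widetilde{\mathbf{M}})$, using $\overline{\mathbf{m}_+(\nu)}=\mathbf{m}_+(\nu)$ and the limit defining $\widetilde{\mathbf{M}}$), I can write $\overline{m}_i=g(m_i)$ for uniquely determined $m_i\in(\widetilde{\mathbf{m}},\mathbf{m}_+(\nu))$. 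The strict monotonicity of $g$ reverses the order, so $\overline{m}_1<\overline{m}_2$ forces $m_1>m_2$.

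Next I would apply \eqref{key-id} to each point and use that $h$ is decreasing on $(m_0,\mathbf{m}_+(\nu))$. Since $m_1>m_2$ there, we get $h(m_1)<h(m_2)$, and then \eqref{key-id} gives
\begin{equation}\label{transport}
h(\overline{m}_1)=h(g(m_1))=h(m_1)<h(m_2)=h(g(m_2))=h(\overline{m}_2).
\end{equation}
Thus $\overline{m}_1<\overline{m}_2$ implies $h(\overline{m}_1)<h(\overline{m}_2)$, which is precisely the assertion that $h$ is increasing on $(\mathbf{m}_+(\nu),\widetilde{\mathbf{M}})$. In short, $h$ on the image interval equals $h\circ g^{-1}$ composed back through the identity, and increasing $=$ (decreasing) $\circ$ (order-reversing).

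\textbf{The main obstacle.} The delicate point is not the algebra but verifying that $g$ is genuinely a bijection onto the full interval $(\mathbf{m}_+(\nu),\widetilde{\mathbf{M}})$, with no gaps or plateaus. Strict monotonicity (Proposition~\ref{P:h}) already rules out plateaus and gives injectivity; what I must still invoke is surjectivity onto an \emph{interval}, i.e. that the image of $g$ has no holes. This follows from the continuity of $h$ together with the intermediate-value argument used in the preceding proposition: the set of $m$ with $\mathcal{V}_m\neq\emptyset$ is an interval, $g$ is monotone, and its endpoints correspond to $\mathbf{m}_+(\nu)$ and the limit $\widetilde{\mathbf{M}}$. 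A careful writer should note that $g$, being monotone with interval domain and with the stated endpoint values, maps onto $(\mathbf{m}_+(\nu),\widetilde{\mathbf{M}})$; once that is in hand, the representation $\overline{m}_i=g(m_i)$ used above is legitimate and the argument closes.
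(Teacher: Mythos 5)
Your proof is correct and follows essentially the same route as the paper's: write $\overline{m}_i=g(m_i)$, use that $g$ is order-reversing (Proposition \ref{P:h}) and that $h$ is decreasing on $(m_0,\mathbf{m}_+(\nu))$, and transport monotonicity through the identity $h(g(m))=h(m)$. Your added remark on the surjectivity of $g$ onto $(\mathbf{m}_+(\nu),\widetilde{\mathbf{M}})$ is a point the paper leaves implicit, but it does not change the argument.
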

\begin{proof}
 Let $\overline{m}_1,\
\overline{m}_2\in(\mathbf{m}_+(\nu),\widetilde{\mathbf{M}})$ such
that $\overline{m}_1\leq\overline{m}_2.$ Then there exists $m_1,\
m_2 \in(\widetilde{\mathbf{m}},\mathbf{m}_+(\nu))$ such that
$m_1\geq m_2$ and
 $\displaystyle\frac{\V(\overline{m}_1)}{\overline{m}_1}+\overline{m}_1=\displaystyle\frac{\V(m_1)}{m_1}+m_1$
and $\displaystyle\frac{\V(\overline{m}_2)}
{\overline{m}_2}+\overline{m}_2=\displaystyle\frac{\V(m_2)}{m_2}+m_2$.
This implies that
$$\frac{\V(\overline{m}_1)}{\overline{m}_1}+\overline{m}_1\leq\frac{\V(\overline{m}_2)}
{\overline{m}_2}+\overline{m}_2.$$
\end{proof}
We are now in position to show that one can extend the definition of
the pseudo-variance function $\V(m)$ to
$m\in(\mathbf{m}_+,\mathbf{M}_+)$. We first use the mean function
$k(.)$ given in  \eqref{L2m}, to define a new mean function
$\overline{k}(.)$. In fact, we know that the function
$g:m\longmapsto\overline{m}$ realizes a bijection from
$(\mathbf{\widehat{m}},\mathbf{m}_+)$ into
$(\mathbf{m}_+,\mathbf{M}_+)$ with
$\mathbf{M}_+=g(\mathbf{\widehat{m}})$. We then define
$\overline{k}(.)$ on
$\overline{\Theta}=k^{-1}((\mathbf{\widehat{m}},\mathbf{m}_+))\subset\Theta$,
by $\overline{k}(\theta)=g(k(\theta))$. If $m=k(\theta)$, then
$\overline{m}=g(k(\theta))=\overline{k}(\theta)$.\\
We define $\overline{\psi}$ as the inverse of $\overline{k}(.)$ from
$(\mathbf{m}_+,\mathbf{M}_+)$ into $\overline{\Theta}$. The
pseudo-variance function is then defined $\forall\ \
m\in(\mathbf{m}_+,\mathbf{M}_+),$ as in \eqref{m2v}, that is
$$\V(m)=m\left(\frac{1}{\overline{\psi}(m)}-m\right),\ \ \ \forall\ \ m\in(\mathbf{m}_+,\mathbf{M}_+).$$

\subsubsection*{Conclusion of proof of Theorem \ref{T:ext2}}
Note that for $m>\mathbf{m}_+(\nu)$ formula \eqref{F(V)} defines $Q_m(dx)$ which
is not a probability measure, and   it may  be negative. Our restriction to $m< \mathbf{M}_+(\nu)$ given by  \eqref{mmm+}
guarantees its positivity, so measure $\overline{Q}_m$ is also non-negative.

We now verify that  $\overline{Q}_m$ is a probability measure with required properties for $m\in(m_0,\mathbf{M}_+(\nu))$.
We write \eqref{Q++}  explicitly
$$\overline{Q}_{m}(dx)=\frac{\V(m)}{\V(m)+m(m-x)}\nu(dx)
+(1-\frac{\V(m)}{m}G_\nu(\frac{\V(m)}{m}+m))
\delta_{\frac{\V(m)}{m}+m}.$$
Note that when
$m\in(m_0,\mathbf{m}_+(\nu))$ we have that
 $$1-\frac{\V(m)}{m}G_\nu\left(\frac{\V(m)}{m}+m\right)=0,$$ so that $\overline{Q}_m$ reduces
 to the distribution given in
\eqref{F(V)}, and has desired properties.
Therefore, without loss of generality we  restrict ourselves to  $m>\mathbf{m}_+(\nu)$ such that $\V(m)/m\geq0$.

%


From \eqref{F(V)} we have
 \begin{multline*}
\int Q_m(dx)
 =  \int \frac{\V(m)}{\V(m)+m(m-x)}\nu(dx)\\
 =  \frac{\V(m)}{m}\int\frac{1}{\frac{\V(m)}{m}+m-x}\nu(dx)\\
 =  \frac{\V(m)}{m}G_\nu\left(\frac{\V(m)}{m}+m\right).
\end{multline*}
Therefore,
%
%
\begin{multline*}
\displaystyle\int\overline{Q}_m(dx)  =
                                       \int Q_m(dx)+p(m)
                                   \\  =
          \frac{\V(m)}{m}G_\nu\left(\frac{\V(m)}{m}+m\right)+1-\frac{\V(m)}{m}G_\nu\left(\frac{\V(m)}{m}+m\right)  \\
 =  1\ ,\ \forall m\in(m_0,\textbf{M}_+).
\end{multline*}

We now verify that $\displaystyle\int \overline{Q}_m(dx)=m$.
We have
\begin{eqnarray*}
\displaystyle\int x\overline{Q}_m(dx)
& = & \int x Q_m(dx)+\int x p(m)\delta_{\frac{\V(m)}{m}+m}(x)dx\\
& = & \int
\frac{\V(m)x}{\V(m)+m(m-x)}\nu(dx)+\left(\frac{\V(m)}{m}+m\right)p(m).
\end{eqnarray*}
The integral is
\begin{eqnarray*}
\int \frac{\V(m)x}{\V(m)+m(m-x)}\nu(dx)
& = & \frac{-\V(m)}{m}\int\frac{-x}{\V(m)/m+m-x}\nu(dx)\\
& = & \frac{-\V(m)}{m}\int\frac{(\V(m)/m+m)-x-(\V(m)/m+m)}{\V(m)/m+m-x}\nu(dx)\\
& = & \frac{-\V(m)}{m}\left[1-(\V(m)/m+m)G_\nu(\frac{\V(m)}{m}+m)\right]\\
& = & \frac{-\V(m)}{m}\left[1-\frac{\V(m)}{m}G_\nu(\frac{\V(m)}{m}+m)-m G_\nu(\frac{\V(m)}{m}+m)\right]\\
& = &
\frac{-\V(m)}{m}\left[1-\frac{\V(m)}{m}G_\nu(\frac{\V(m)}{m}+m)\right]\\
 & + &
\V(m)G_\nu(\frac{\V(m)}{m}+m).
\end{eqnarray*}
On the other hand we have\\
 \begin{eqnarray*}
 \left(\frac{\V(m)}{m}+m\right)p(m)
& = &\left(\frac{\V(m)}{m}+m\right)\left[1-\frac{\V(m)}{m}G_\nu\left(\frac{\V(m)}{m}+m\right)\right]\\
 & = & m-\V(m)G_\nu\left(\frac{\V(m)}{m}+m\right)+\frac{\V(m)}{m}
\left[1-\frac{\V(m)}{m}G_\nu\left(\frac{\V(m)}{m}+m\right)\right].
\end{eqnarray*}
Thus
$$\int x\overline{Q}_m(dx)=\int x Q_m(dx)+\int x p(m)\delta_{\frac{\V(m)}{m}+m}(x)dx=m. $$

Next we verify formula \eqref{Q-Var-ext2}.
We have
\begin{eqnarray*}
\int x(x-m)\overline{Q}_m(dx)
 & = &  \int x(x-m)Q_m(dx)+\int x(x-m)p(m)\delta_{m+\frac{\V(m)}{m}}\\
 & = & \int x(x-m)Q_m(dx)+\left(m+\frac{\V(m)}{m}\right)\frac{\V(m)}{m}p(m).
\end{eqnarray*}
The integral is
\begin{eqnarray*}
\int x(x-m)Q_m(dx)
& = & \frac{\V(m)}{m}\left[\int x Q_m(dx)-\int x\nu(dx)\right]\\
& = &
\frac{\V(m)}{m}\Big[\left(\frac{-\V(m)}{m}\right)\left(1-\frac{\V(m)}{m}G_\nu\left(\frac{\V(m)}{m}+m\right)\right)\\
& + & \V(m)G_\nu\left(\frac{\V(m)}{m}+m\right)-m_0\Big].
\end{eqnarray*}
On the other hand,
$$\left(m+\frac{\V(m)}{m}\right)\frac{\V(m)}{m}p(m)=\left(m+\frac{\V(m)}{m}\right)\frac{\V(m)}{m}
\left[1-\frac{\V(m)}{m}G_\nu\left(\frac{\V(m)}{m}+m\right)\right] $$
$$=\frac{\V(m)}{m}\left[m-\V(m)G_\nu\left(\frac{\V(m)}{m}+m\right)
+\frac{\V(m)}{m}\left(1-\frac{\V(m)}{m}G_\nu\left(\frac{\V(m)}{m}+m\right)\right)\right].$$
Thus
$$\int x(x-m)\overline{Q}_m(dx)=\frac{\V(m)}{m}\left(m-m_0\right).$$


\end{document}